\newtheorem{theorem}{Theorem}[section]
\newtheorem{lemma}[theorem]{Lemma}
\newtheorem{proposition}[theorem]{Proposition}
\newtheorem{corollary}[theorem]{Corollary}
\newtheorem{definition}[theorem]{Definition}
\theoremstyle{definition}
\newtheorem{remark}[theorem]{Remark}
\title[Quaternionic positive real lemma]{Positive and generalized positive real lemma for slice hyperholomorphic functions}
\author[D. Alpay]{Daniel Alpay}
\address{(DA)
Faculty of Mathematics, Physics, and Computation\\
Schmid College of Science and Technology\\
Chapman University\\
One University Drive
Orange, California 92866\\
USA}
\email{alpay@chapman.edu}
\author[F. Colombo]{Fabrizio Colombo}
\address{(FC) Politecnico di
Milano\\Dipartimento di Matematica\\Via E. Bonardi, 9\\20133 Milano,
Italy}
\email{fabrizio.colombo@polimi.it}
\author[I. Lewkowicz]{Izchak Lewkowicz}
\address{(IL) Department of electrical engineering
Ben-Gurion University of the Negev\\ P.O.B. 653\\ Beer-Sheva, 84105\\
Israel}
\email{izchak@ee.bgu.ac.il}
\author[I. Sabadini]{Irene Sabadini}
\address{(IS) Politecnico di
Milano\\Dipartimento di Matematica\\Via E. Bonardi, 9\\20133 Milano\\Italy}
\email{irene.sabadini@polimi.it}
\begin{document}
\maketitle
\begin{abstract}
In this paper we prove a quaternionic positive real lemma as well as its generalized version, in case the associated kernel has negative squares for slice hyperholomorphic functions. We consider the case of functions with positive real part in the half space of quaternions with positive real part, as well as the case of (generalized) Schur functions in the open unit ball.
\end{abstract}

\noindent AMS Classification: 30C40, 30G35, 93B15.

\noindent {\em }
\date{today}
\tableofcontents
\section{Introduction}
\setcounter{equation}{0}
Scalar rational function which analytically map the open right half of the complex plane to its closure are called
in system and control engineering positive real are called $\mathcal{PR}$. More generally, matrix-valued functions analytic
in the open unit disk and having a positive real part there play an important role in engineering, network theory and operator theory.
They can be characterized in a number of ways, and we mention in particular Herglotz integral representations. They admit numerous generalizations, for instance to quaternions
(see e.g. \cite{MR330390911,MR3250507}) and to the case of negative squares. The latter was initiated and studied in much details by Krein and Langer; see e.g.
\cite{kl1,MR47:7504}.\smallskip

 The scalar rational case is of special interest. Indeed,
for nearly a century it has been recognized that the driving point impedance of an electrical circuit comprised of resistors, inductors and capacitors is a
$\mathcal{PR}$ function and conversely every $\mathcal{PR}$ function may be realized as a  driving point impedance of such a circuit; see \cite{brune-31,Ca1}.
A classical electro-mechanical duality reveals that mechanical systems comprised of mass-damper-spring  (strictly speaking, mass should be replaced by inerter;
see \cite{inerter} for the latter)
are described by  $\mathcal{PR}$ functions. Thus positive real functions are perceived as a mathematical model of linear passive systems.
The positive real  lemma (also called Kalman-Yakubovich-Popov lemma) offers an easy-to-check characterization of matrix-valued  $\mathcal{PR}$ functions with no pole at infinity.
There are a few hundreds of papers on this subject. The purpose of this work is to introduce an extension of
the positive real lemma in the setting of quaternionic analysis and rational slice hyperholomorphic functions.
To set the framework, we first discuss the complex setting, and recall that an important question in the theory of matrix-valued rational functions is to relate metric
properties of the function to those of one of its minimal realization. The case of
functions taking unitary values on the boundary is considered in \cite{ag,MR699564}. The positive real lemma characterizes minimal realizations of functions having a positive real part in a half-plane and reads (see \cite{Anderson_Moore,DDGK,faurre,MR525380}):

\begin{theorem}
\label{thmkyp1}
Let $R$ be a $\mathbb C^{n\times n}$-valued rational function, analytic at infinity, and with minimal realization
\begin{equation}
R(z)=D+C(zI_N-A)^{-1}B,
\end{equation}
with $D=R(\infty)\in\mathbb C^{n\times n}$ and $(C,A,B)\in\mathbb C^{n\times N}\times\mathbb C^{N\times N}\times\mathbb C^{N\times n}$.
Then $R$ has a real positive real part in the open right half-plane if and only if there exists
a negative definite matrix $H\in\mathbb C^{N\times N}$ satisfying
\begin{equation}
\label{conv1}
\begin{pmatrix}H&0\\0&I_n\end{pmatrix}\begin{pmatrix}A&B\\C&D\end{pmatrix}+\begin{pmatrix}A&B\\ C&D\end{pmatrix}^*\begin{pmatrix}H&0\\0&I_n\end{pmatrix}\ge 0.
\end{equation}
\end{theorem}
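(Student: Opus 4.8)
The plan is to recast \eqref{conv1} as a positivity statement for the Hermitian kernel attached to $R$, and then to prove the two implications separately. Writing $P:=-H$, inequality \eqref{conv1} is equivalent to $P>0$ together with
\[
M:=\begin{pmatrix}HA+A^*H & HB+C^*\\ C+B^*H & D+D^*\end{pmatrix}\ge 0.
\]
The object mediating between \eqref{conv1} and the analytic side is
\[
K(z,w)=\frac{R(z)+R(w)^*}{z+\overline w},\qquad \mathrm{Re}\,z,\ \mathrm{Re}\,w>0,
\]
and I would use the classical Herglotz--Nevanlinna equivalence: $R$ has nonnegative real part in the open right half-plane if and only if $K$ is a positive kernel there. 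One direction of this equivalence is immediate, since $R(z)+R(z)^*=2\,\mathrm{Re}(z)\,K(z,z)$.

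For the implication \eqref{conv1} $\Rightarrow$ positive real part I would factor $M=\begin{pmatrix}L_1^*\\ L_2^*\end{pmatrix}\begin{pmatrix}L_1 & L_2\end{pmatrix}$, which unpacks as $HA+A^*H=L_1^*L_1$, $HB+C^*=L_1^*L_2$ and $D+D^*=L_2^*L_2$. Setting $\phi(z)=(zI_N-A)^{-1}B$ and $F(z)=L_2+L_1(zI_N-A)^{-1}B$, and inserting the resolvent identities $zX=I_N+AX$, $\overline w\,Y=I_N+YA^*$ (with $X=(zI_N-A)^{-1}$, $Y=(\overline w I_N-A^*)^{-1}$) into the realization formula for $R(z)+R(w)^*$, a short computation telescopes everything into
\[
R(z)+R(w)^*=(z+\overline w)\,\phi(w)^*P\,\phi(z)+F(w)^*F(z).
\]
Dividing by $z+\overline w$ exhibits $K$ as the sum of $\phi(w)^*P\phi(z)$, positive because $P>0$, and $F(w)^*F(z)/(z+\overline w)$, positive by the integral representation $1/(z+\overline w)=\int_0^\infty e^{-tz}\,\overline{e^{-tw}}\,dt$. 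Hence $K\ge 0$ and $R$ is positive real.

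The converse, positive real part $\Rightarrow$ \eqref{conv1}, is the substantive half and is where I expect the main difficulty. Assuming $R$ positive real, the kernel $K$ is positive; I would attach to it its reproducing kernel Hilbert space $\mathcal H(K)$ and consider the observability map $T\colon\mathbb C^N\to\mathcal H(K)$, $(Tx)(z)=C(zI_N-A)^{-1}x$. Minimality enters decisively: observability makes $T$ injective, so the pullback along $T$ of the inner product of $\mathcal H(K)$ is a \emph{strictly} positive definite form on $\mathbb C^N$, whose Gram matrix is $P=-H>0$; this is exactly the source of the strict negative definiteness of $H$. The resolvent (difference-quotient) structure of $\mathcal H(K)$, i.e. the de Branges identity satisfied by $K$, should then reproduce the factorization $M=W^*W\ge0$ with $W=\begin{pmatrix}L_1 & L_2\end{pmatrix}$ read off from $T$, yielding \eqref{conv1}.

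The obstacles I anticipate in this last step are: (i) showing that the functions $C(zI_N-A)^{-1}x$ actually lie in $\mathcal H(K)$ and that $T$ carries the correct Gram matrix; (ii) upgrading semidefiniteness to the strict definiteness of $H$, which is precisely where minimality is indispensable; and (iii) controlling possible eigenvalues of $A$ on the imaginary axis, where $\phi$ and $K$ are defined only on the open half-plane, so that a continuity or limiting argument is needed. As an alternative route for the converse one may instead spectrally factor the para-Hermitian Popov function $R(z)+R(-\overline z)^*$, which is nonnegative on the imaginary axis, and match the realization of the spectral factor to produce $P$; there too the crux is existence of the factorization together with strict positivity of $P$, which again rest on minimality combined with positive realness.
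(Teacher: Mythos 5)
The substantive gap is in the necessity direction, positive real part $\Rightarrow$ existence of a negative definite $H$ satisfying \eqref{conv1}: this is the actual content of the positive real lemma, and your proposal does not prove it. What you give is a program --- form $\mathcal H(K)$, map $\mathbb C^N$ into it by $x\mapsto C(zI_N-A)^{-1}x$, declare $P=-H$ to be the pulled-back Gram matrix, and assert that the de Branges identity ``should then reproduce the factorization'' --- and the three obstacles you yourself list, namely (i) membership of $C(zI_N-A)^{-1}x$ in $\mathcal H(K)$ and identification of the Gram matrix, (ii) strictness of $P>0$ via minimality, and (iii) spectrum of $A$ on the imaginary axis, are not side issues: resolving them \emph{is} the theorem (essentially Faurre's theory, or a dissipativity/spectral-factorization argument; your alternative route via factoring $R(z)+R(-\overline z)^*$ is likewise only named, with the existence of the factorization and the matching of minimal realizations left open). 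Note that the paper itself does not prove this direction: Theorem \ref{thmkyp1} is quoted from \cite{Anderson_Moore,DDGK,faurre,MR525380}, and even in the quaternionic Theorem \ref{kypbastille} the corresponding implication is obtained by reducing to the complex case and invoking the classical lemma. So as a self-contained proof your text is incomplete exactly where the difficulty sits.

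The sufficiency direction is essentially right, and your identity
\[
R(z)+R(w)^*=(z+\overline w)\,\phi(w)^*P\,\phi(z)+F(w)^*F(z)
\]
is precisely the paper's formula \eqref{thilde} (Faurre's computation, in the remark following Theorem \ref{kypbastille}) with your matrix $M$ factored as $\begin{pmatrix}L_1&L_2\end{pmatrix}^*\begin{pmatrix}L_1&L_2\end{pmatrix}$; evaluated at $w=z$ it gives $R(z)+R(z)^*=2\,{\rm Re}(z)\,\phi(z)^*P\phi(z)+F(z)^*F(z)\ge 0$, which is all that the statement of Theorem \ref{thmkyp1} requires. However, your stronger claim that each summand is a positive \emph{kernel} is unjustified and in general false for matrix-valued functions: in both $\phi(w)^*P\phi(z)$ and $F(w)^*F(z)$ the adjoint of the second variable sits on the \emph{left}, so the associated Gram matrices are partial (block) transposes of positive semidefinite matrices, and partial transposition does not preserve positive semidefiniteness; positivity of $P$ alone does not make $\phi(w)^*P\phi(z)$ a positive kernel (this is harmless only when $n=1$ or on the diagonal $w=z$). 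This is exactly why the paper runs the computation in the other variable (substituting $B=H^{-1}(S-C^*)$ rather than $C=S^*-B^*H$), arriving at \eqref{thilde1}, whose two terms have the manifestly positive form $G(x)\Lambda G(y)^*$ with $\Lambda\ge 0$. Your decomposition thus suffices for the theorem as stated, but would not suffice for the kernel-positivity statements (such as \eqref{kphi}) which the paper needs, and which the quaternionic extension argument is built on.
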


\begin{remark}{\rm The matrix $H$  is not unique and the set of all matrices $H$ satisfying \eqref{conv1} is convex. Assuming $D+D^*$ positive definite, the Schur complement formula shows that
\eqref{conv1} holds if and only if $H$ satisfies the Riccati inequality
\begin{equation}
HA+A^*H-(HB+C^*)(D+D^*)^{-1}(C+B^*H)\le 0,
\end{equation}
see \cite[Propositions 3.1 and 3.2]{AlpLew2011},
and it can be proved (see \cite{faurre}) that this set has a minimal and a maximal solution. The minimal solution correspond to the spectral factorization.
To see the link with factorizations, write the left side of \eqref{conv1} as
\[
\begin{pmatrix}L \\ M\end{pmatrix}\begin{pmatrix} L^*&M^*\end{pmatrix},
\]
with $L\in\mathbb C^{N\times \ell}$ and $M\in\mathbb C^{n\times \ell}$, and
where $\ell$ is equal to the rank of the left hand side of \eqref{conv1}.\smallskip

We have for $z\in\mathbb C$ (see \cite[p. 26]{faurre})
\begin{equation}
\label{facto}
R(z)+R(-\overline{z})^*=(M+C(zI-A)^{-1}H^{-1}L)(M+C(-\overline{z}I-A)^{-1}H^{-1}L)^*
\end{equation}
and so, for $x\in\mathbb R$
\begin{equation}
\label{facto1}
R(x)+R(-x)^*=(M+C(xI-A)^{-1}H^{-1}L)(M+C(-xI-A)^{-1}H^{-1}L)^*
\end{equation}
Note that \eqref{facto1} (and not \eqref{facto} because of the noncommutativity) makes sense in the quaternionic setting, and
then can be extended from $x\in\mathbb R$ to general quaternions $q\in\mathbb H$ using the $\star$-product  see formulas \eqref{thilde} and \eqref{thilde1} and next section.}
\end{remark}

\begin{remark}
{\rm Theorem \ref{thmkyp1} has been extended in \cite{DDGK} to the case of rational
functions which are only positive on the imaginary axis. The matrix $H$ in \eqref{conv1} is
then only Hermitian non-singular.}
\end{remark}

Closely related to the above right-half plane Kalman-Yakubovich-Popov Lemma in \eqref{conv1} (and its generalization to generalized positive functions)
is the analogous result for rational matrix-valued functions analytic in the open unit disk
and contractive on the unit circle (i.e. Schur functions), or possibly having a (finite) number of poles in the open unit disk (i.e.
generalized Schur functions). See \cite[(16), p. 604]{DDGK} for the following theorem.

% \Phi in (10) of DDGK is our S in Th 1.4

\begin{theorem}
\label{gs1234}
Let $S$ be a ${\mathbb C}^{n\times m}$-valued rational function, analytic at infinity,
with minimal realization $S(z)=D+C(zI_N-A)^{-1}B$. Then, $S$ takes contractive
values on the unit circle if and only if there is an invertible Hermitian matrix $H$
such that
\begin{equation}
\label{opera4!4!6!7!8!8!21!}
\begin{pmatrix}H&0\\0&I_{n}\end{pmatrix}-
\begin{pmatrix}A&B\\ C&D\end{pmatrix}^*
\begin{pmatrix}H&0\\0&I_{n}\end{pmatrix}
\begin{pmatrix}A&B\\ C&D\end{pmatrix}
\ge 0.
\end{equation}
Furthermore, $S$ is a Schur function if and only if $H$ is negative.
\end{theorem}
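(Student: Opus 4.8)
The plan is to prove both equivalences through the fundamental quadratic identity attached to the realization. Writing $M=\begin{pmatrix}A&B\\C&D\end{pmatrix}$ and $J=\begin{pmatrix}H&0\\0&I\end{pmatrix}$, I would first evaluate the quadratic form of $J-M^{*}JM$ on a generic vector $\binom{u}{\xi}$ and group the terms, so that the matrix inequality becomes the pointwise statement
\[
\langle Hu,u\rangle-\langle H(Au+B\xi),Au+B\xi\rangle-\|Cu+D\xi\|^{2}+\|\xi\|^{2}\ge 0
\]
for all $u\in\mathbb{C}^{N}$ and all $\xi$. This reformulation is the engine of the whole argument, since it converts a statement about a block matrix into one that interacts directly with the resolvent $(zI-A)^{-1}$.

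For the direction asserting that a solution $H$ makes $S$ contractive on the circle, I would substitute the particular state $u=(zI-A)^{-1}B\xi$. The resolvent identity then gives $Au+B\xi=zu$ and $Cu+D\xi=S(z)\xi$, and the inequality collapses to
\[
(1-|z|^{2})\langle Hu,u\rangle+\|\xi\|^{2}-\|S(z)\xi\|^{2}\ge 0 .
\]
On the unit circle the first term vanishes and one reads off $\|S(z)\xi\|\le\|\xi\|$. The same computation settles the easy half of the final assertion: taking $\xi=0$ forces $H-A^{*}HA\ge C^{*}C\ge 0$, and testing on an eigenvector of $A$ shows, when $H<0$, that every eigenvalue has modulus $\ge 1$; hence $S$ is analytic in the open unit disk, and contractivity on the boundary together with the maximum modulus principle upgrades this to a Schur function.

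The hard direction is the converse: manufacturing an invertible Hermitian $H$ out of the mere contractivity of $S$ on the circle. Here I would exploit minimality. Observability of $(C,A)$ makes $v\mapsto C(zI-A)^{-1}v$ injective, so the resulting $N$-dimensional space carries a unique Hermitian Gram form whose reproducing kernel is $\frac{I-S(z)S(w)^{*}}{1-z\bar w}$; I would take $H$ to be, up to sign and inversion, the Gram matrix of this form. Reversing the identity above shows that this $H$ satisfies the linear matrix inequality, while controllability of $(A,B)$ forces it to be nonsingular. The Krein–Langer theory of kernels with negative squares then matches the signature of $H$ with the number of negative squares of the kernel; this number is zero precisely when $S$ is a genuine Schur function, which yields $H<0$ and completes the ``furthermore'' statement.

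The main obstacle is exactly this converse construction. The clean substitution that trivialises the ``if'' part must be run backwards, and the two delicate points are that the Gram form attached to the kernel is nondegenerate — which is where minimality is indispensable — and that its signature is correctly matched to whether $S$ is contractive only on the circle or throughout the disk. An alternative and perhaps more transparent route would be a double Cayley transform reducing the disk statement to the half-plane Theorem~\ref{thmkyp1}, turning the Stein-type inequality $J-M^{*}JM\ge 0$ into the Lyapunov-type inequality \eqref{conv1}; the price to pay there is tracking how a minimal realization transforms under the Möbius change of variable.
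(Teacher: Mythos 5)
First, a remark on the comparison you were asked to survive: the paper does \emph{not} prove Theorem \ref{gs1234}. It is quoted from \cite[(16), p.~604]{DDGK} and later invoked as a known complex result (in the proof of Theorem \ref{thmko}), so your argument can only be judged on its own merits and against the standard literature. Your ``if'' direction is correct and is the standard one: evaluating the quadratic form of \eqref{opera4!4!6!7!8!8!21!} at $u=(zI-A)^{-1}B\xi$ gives $(1-|z|^{2})\langle Hu,u\rangle+\|\xi\|^{2}-\|S(z)\xi\|^{2}\ge 0$, which yields contractivity on the circle, and when $H<0$ it yields $\|S(z)\xi\|\le\|\xi\|$ directly at every $z$ in the open disk off the spectrum of $A$; this is in fact cleaner than your detour through the eigenvalues of $A$ and the maximum principle, which needs a separate argument ruling out poles of $S$ \emph{on} the unit circle before the maximum principle can be invoked.

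The converse, however, contains a genuine gap, located exactly at the step you call the crux. You propose to obtain $H$ as the Gram matrix of a Hermitian form on the $N$-dimensional span of the functions $z\mapsto C(zI-A)^{-1}v$, chosen so that $K_S(z,w)=(I-S(z)S(w)^{*})/(1-z\overline{w})$ is the reproducing kernel of that space. No such form exists in general: a kernel reproducing an $N$-dimensional space has rank at most $N$, whereas $K_S$ has finite rank only when $S$ takes \emph{unitary} (lossless) values on the circle. For any non-inner contractive rational $S$ with $N\ge 1$ --- say $S(z)=c/(z-a)$ with $|a|>1$ and $0<|c|<|a|-1$ --- the reproducing kernel space of $K_S$ is infinite dimensional; moreover the functions $K_S(\cdot,w)c$ have their poles at $1/\overline{w}$, not at eigenvalues of $A$, so they do not even lie in the span you work with. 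What is true --- and this is precisely the content of the identity \eqref{thilde1} in the half-plane case --- is that $K_S$ equals a rank-at-most-$N$ Hermitian term built from $H$ \emph{plus} a positive residual manufactured from the slack of the inequality \eqref{opera4!4!6!7!8!8!21!}; the residual vanishes only in the lossless case, so $H$ cannot be read off from $K_S$ as a Gram matrix, and the Krein--Langer signature-matching you invoke rests on this same false identification. Producing $H$ from contractivity on the circle genuinely requires other machinery: spectral factorization of $I-S(z)S(1/\overline{z})^{*}$ as in \cite{faurre}, a Riccati equation, the argument of \cite{DDGK} itself, or the Cayley-transform reduction to Theorem \ref{thmkyp1} which you mention as an alternative but do not carry out (and which has its own bookkeeping: invertibility of $I+S$ up to a constant unitary factor, and preservation of minimality under the change of variable). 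As it stands, the ``only if'' half, and with it the direction (Schur $\Rightarrow$ $H<0$) of the final claim, is not established.
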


Generalized Schur functions were characterized in \cite{kl1} as quotient
of Schur functions, the denominator being a finite matrix Blaschke product. This result allows
to reduce the study of realizations of generalized Schur functions to the study of
Schur functions. They are been studied in a number of works; see e.g. \cite{adrs},
\cite{MR2392765}, \cite{MR2002664}, \cite{MR1677962}, \cite[Section II]{vai}.\smallskip

When leaving the setting of the complex numbers and going to quaternionic slice hyperholomorphic functions, a number of questions have to be addressed, the first one being what are the objects
to be studied, that is, what are now Schur and generalized Schur functions, and positive real and generalized positive real Schur functions. Since the spectral theorem holds
for Hermitian matrices with quaternionic entries, one can defined positive definite kernels, and kernels having a finite number of negative squares; see \cite{zbMATH06658818} and Definition
\ref{neg123123} below. We will define the classes of functions to be studied in terms of the number of negative squares of an associated kernel; in the complex setting the two definitions
are equivalent, but this is not the case anymore in the quaternionic setting. The following definitions first appeared in \cite{acls_milan,MR3127378}. In the statement, and in this paper in general,
$\mathbb H$ denotes the skew-field of the quaternions.

\begin{definition}
A $\mathbb H^{n\times n}$-valued function $\phi$, rational slice hyperholomorphic in a neighborhood $\Omega$ of the origin in $\mathbb H$ is called positive
(resp. generalized positive)
if the kernel
\begin{equation}
K_\phi(p,q)=(\phi(p)+\phi(q)^*)\star (p+\overline{q})^{-\star}
\end{equation}
is positive definite (resp. has a finite number of negative squares) in $\Omega$.
\end{definition}

\begin{definition}
A $\mathbb H^{n\times m}$-valued function $S$ , rational slice hyperholomorphic in a neighborhood $\Omega$ of the origin in $\mathbb H$ is called a Schur function
(resp. a generalized Schur function) if the kernel
\begin{equation}
K_S(p,q)=\sum_{u=0}^\infty p^u(I_n-S(p)S(q)^*)\overline{q}^u=(I_n-S(p)S(q)^*) \star (1-p\overline{q})^{-\star}
\end{equation}
is positive definite (resp. has a finite number of negative squares) in $\Omega$.
\end{definition}
\begin{remark}{\rm
Realizations of operator-valued slice hyperholomorphic functions with real positive part have been considered in  \cite{acls_milan,zbMATH06658818}.
There the main tools were reproducing kernel Hilbert spaces and
the theory of relations in Hilbert and Pontryagin spaces. Here the strategy is different (although a bit overlapping).
The strategy used to prove a quaternionic positive real lemma is to start from a positive definite kernel in the quaternions, restrict it to the real line and apply the map $\chi$ (see
\eqref{chi123})
to obtain a condition on a complex matrix-valued positive definite on an open interval of the real line. An extension theorem
(called Fitzgerald's theorem in \cite[Theorem p. 144]{donoghue}; see \cite{zbMATH03282859})
allows to extend it to the complex plane and so apply the
classical positive real lemma on the new kernel.
The case of negative squares requires an extension of this result. The underlying structure allows to go back to the quaternionic setting.}
\end{remark}

\begin{remark}
 {\rm  When the kernel $K_\Phi$ is not assumed to have a finite number of negative squares, one can still obtain realizations with an interesting structure
using Krein spaces; see \cite{MR89a:47055,MR903068} and, in the quaternionic setting, \cite{2018arXiv180405255A}.}
\end{remark}

The paper contains eight sections besides the introduction, and we now proceed to describe their contents.
Section 2 contains some preliminaries on quaternions and slice hyperholomorphic functions. Section 3 provides some characterizations of the image of quaternionic matrices under a suitable map denoted by $\chi$ and also of the minimality of a triple of matrices.  In Section 4 we prove a quaternionic version of the positive real lemma in the case of the half space and unit
ball. Section 5
contains the extension result in the case of kernels with negative squares mentioned in the
previous paragraph. Finally, in Section 6 we extend the results in Section 4 to the case of negative squares.

\section{Slice hyperholomorphic functions}
\setcounter{equation}{0}
In this section we recall some basic facts about quaternions and slice hyperholomorphic functions which may be useful in the sequel. We refer the interested reader to the book \cite{zbMATH06658818} for a more complete treatment of these topics.
\\
The skew field $\mathbb{H}$ of quaternions consists of elements of the form
$p=x_0  + x_{1} i +x_{2} j + x_{3} k$, $x_i\in \mathbb{R}$, $i=0,1, 2,3$,
where the imaginary units $i, j, k$ satisfy
$$i^2=j^2=k^2=-1, \ ij=-ji=k, \ jk=-kj=i, \ ki=-ik=j.$$ The real number $x_0$ also denoted by ${\rm Re}(p)$ is called real part of $p$
while $ x_{1} i +x_{2} j + x_{3} k$ is called vector part or imaginary part of $p$.
A quaternion $p=x_0  + x_{1} i +x_{2} j + x_{3} k$ may be identified with a vector in $\mathbb R^4$ and its norm is the Euclidean norm in $\mathbb R^4$, i.e.
$|p|=\sqrt{x_0^2+x_1^2 +x_2^2+x_3^2}$. For any $p\in\mathbb H$ as above its conjugate is defined by $\overline p=x_0  - x_{1} i - x_{2} j - x_{3} k$; note that $p\overline p=\overline p p=|p|^2$.
\\
By selecting one of the imaginary units of the basis, e.g. the unit $i$, a quaternion $p$ can be written in terms of two complex numbers $z_1, z_2$ belonging to the complex plane containing elements of the form $x+iy$, namely $p =z_1+z_2j$ with $z_1=x_0+ix_1$, $z_2=x_2+ix_3$. Moreover, one may identify the quaternion $p$ with a $2\times 2$ complex valued matrix via the map $\chi_i$ defined by (see \cite{MR97h:15020})
\begin{equation}
\label{chi123}
\chi_i(p)=\begin{pmatrix}z_1&z_2\\ -\overline{z_2}&\overline{z_1}\end{pmatrix}.
\end{equation}
In the sequel, in order to ease the notation the subscript will be omitted and we will write $\chi$ instead of $\chi_i$. The map $\chi\, : \mathbb H\to\mathbb C^{2\times 2}$ is an injective homomorphism of rings, i.e. $\chi(p+q)=\chi(p)+\chi(q)$ and $\chi(pq)=\chi(p)\chi(q)$, for any $p,q\in\mathbb H$. Further properties of $\chi$ may be found in \cite{MR97h:15020}.
\\
It is also useful to introduce the set $\mathbb S$ of purely imaginary quaternions with square equal $-1$. This set coincides with the set of purely imaginary quaternions with norm $1$ and it is a $2$-dimensional sphere in $\mathbb H$ identified with $\mathbb R^4$. Any nonreal quaternion $q$ uniquely identifies the element $I\in\mathbb S$ defined by $I= x_{1} i +x_{2} j + x_{3} k/|x_{1} i +x_{2} j + x_{3} k|$, so that $p=x+I y$ with $x=x_0$ and $y=|x_{1} i +x_{2} j + x_{3} k|$. Moreover $p$ defines a $2$-sphere $[p]$ given by $[p]=\{x+Jy, \ J\in\mathbb S\}$.
\\
We now introduce the notion of slice hyperholomorphic function with values in a quaternionic Banach space. These functions are naturally defined on open sets $\Omega$ which are axially symmetric, i.e. such that $p\in\Omega$ implies that $[p]\subset\Omega$. It is immediate that the quaternionic unit ball $\mathbb B$ and the half space $\mathbb H_+$ of quaternions with positive real part are axially symmetric.
\begin{definition}
Let $\Omega\subseteq \mathbb H$ be an axially symmetric set and let $\mathcal{X}$ be a two sided quaternionic Banach space. A function
$f:\ \Omega\to\mathcal{X}$ of the form
$f(p)=f(x+{I}y)=\alpha (x,y) +{I}\beta (x,y)$ where $\alpha, \beta:
\Omega\to \mathcal{X}$ depend only on $x,y$, are real differentiable, satisfy the Cauchy-Riemann
equations
\begin{equation}\label{CR1}
\begin{cases}
\partial_x \alpha -\partial_y\beta=0\\
\partial_y \alpha
+\partial_x\beta=0,
\end{cases}
\end{equation}
and
\begin{equation}\label{alfabeta1}
\begin{split}
\alpha(x,-y)=\alpha(x,y),\qquad
\beta(x,-y)=-\beta(x,y).
\end{split}
\end{equation}
is said to be (left) slice hyperholomorphic.
\end{definition}
Observe that, in the definition, if $p=x$ is a real
quaternion, then ${I}$ is not uniquely defined but the hypothesis
that $\beta$ is odd in the variable $y$ implies $\beta(x,0)=0$.
\\
One can also introduce the definition of right slice hyperholomorhic function: th eonly cahnge in the previous definition is that $f(p)=f(x+{I}y)=\alpha (x,y) +\beta (x,y){I}$.
\\
The definition holds, in particular, when $\mathcal{X}=\mathbb H^{n\times m}$ the quaternionic two sided Banach space of $n\times m$ matrices with quaternionic entries and in the scalar valued case.
\\
When we consider two slice hyperholomorhic functions $f,g$ on $\Omega$ with values in $\mathcal X$, $\mathcal X_1$, respectively, and it makes sense to define the product $AB$ of $A\in\mathcal X$, $B\in\mathcal X_1$ we can define the $\star$-product of $f$ and $g$ as follows: let $f(p)=f(x+{I}y)=\alpha (x,y) +{I}\beta (x,y)$, $g(p)=g(x+{I}y)=\gamma (x,y) +{I}\delta (x,y)$, then
$$
f\star g= (\alpha\gamma -\beta \delta)+I(\alpha \delta+\beta \gamma).
$$
In the case a function $f$ is scalar valued, it makes sense to define its $\star$-reciprocal. We refer the reader to \cite{MR2752913,zbMATH06658818} for more details. Here it is sufficient to recall how to construct the $\star$-inverses of the functions $\varphi(p)=p+q$, $\psi(p)=1-p\overline q$ which are left hyperholomorphic in $p$ (and right slice hyperholomorphic in $\overline q$):
\[
\begin{split}
(1-p\overline q)^{-\star}&=(1-2{\rm Re}(q)p+|q|^2p^2)^{-1}(1-pq)\\
(p+\overline q)^{-\star}&=(|q|^2+2{\rm Re}(q)p+p^2)^{-1}(p+q).
\end{split}
\]
\section{Minimality}
\setcounter{equation}{0}
The map $\chi=\chi_i$ that has been introduced in the previous section can be extended to quaternionic matrices, in fact if $M=(q_{\ell s})\in\mathbb H^{n\times m}$, $q_{\ell s}=z_{1,\ell s}+ z_{2,\ell s}j$, $z_{1,\ell s}$, $z_{2,\ell s}\in\mathbb C$ we set $M=A+Bj$ where $A=(z_{1,\ell s})$, $B=(z_{2,\ell s})\in\mathbb C^{n\times m}$. Then we define
\[
\chi(M)=\chi(A+Bj)=
\begin{pmatrix}
A&B\\
-\overline{B}&\overline{A}\end{pmatrix}.
\]
In this section, we study the connections between the matrices associated to a quaternionic realization and their images under the map $\chi$. Our first goal is to characterize the range of $\chi$.
\begin{proposition}
Let $(A,B)\in\mathbb C^{n\times m}\times\mathbb C^{n\times m}$ and let $\xi_1,\xi_2\in\mathbb C^m$ be such that
\begin{equation}
\label{bastille}
\begin{pmatrix}
A&B\\
-\overline{B}&\overline{A}\end{pmatrix}
\begin{pmatrix}\xi_1\\ -\overline{\xi_2}\end{pmatrix}=
\begin{pmatrix}0_n\\ 0_n\end{pmatrix}.
\end{equation}
Then
\begin{equation}
\label{bastille4}
\begin{pmatrix}
A&B\\
-\overline{B}&\overline{A}\end{pmatrix}\begin{pmatrix}\xi_2\\ \overline{\xi_1}\end{pmatrix}=
\begin{pmatrix}0_n\\ 0_n\end{pmatrix}.
\end{equation}
\end{proposition}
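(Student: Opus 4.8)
The plan is to observe that the two column vectors appearing in \eqref{bastille} and \eqref{bastille4} are related by the conjugate-linear symmetry built into the range of $\chi$, so that the assertion reduces to taking complex conjugates of the two block-row equations of \eqref{bastille}. Concretely, I would first read off \eqref{bastille} as the pair of identities
$A\xi_1-B\overline{\xi_2}=0$ and $\overline{B}\xi_1+\overline{A}\,\overline{\xi_2}=0$,
coming from the top and bottom rows respectively. Conjugating the first identity gives $\overline{A}\,\overline{\xi_1}-\overline{B}\xi_2=0$, which is precisely the bottom row $\begin{pmatrix}-\overline{B}&\overline{A}\end{pmatrix}\begin{pmatrix}\xi_2\\ \overline{\xi_1}\end{pmatrix}=0$ of \eqref{bastille4}; conjugating the second gives $A\xi_2+B\overline{\xi_1}=0$, which is precisely the top row of \eqref{bastille4}. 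Thus \eqref{bastille4} follows immediately, with no further hypotheses used: one simply conjugates the two components and exchanges their roles.

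More conceptually, I would make this symmetry explicit by introducing, on $\mathbb C^{2\ell}$, the conjugate-linear map $\mathcal J_\ell\begin{pmatrix}u\\ v\end{pmatrix}=\begin{pmatrix}-\overline{v}\\ \overline{u}\end{pmatrix}$, which satisfies $\mathcal J_\ell^2=-I_{2\ell}$ and encodes right multiplication by the quaternion $j$ in the complexified picture. A short computation shows that every matrix in the range of $\chi$ intertwines these maps, namely $\chi(M)\,\mathcal J_m=\mathcal J_n\,\chi(M)$ for $M\in\mathbb H^{n\times m}$. Since $\mathcal J_m\begin{pmatrix}\xi_1\\ -\overline{\xi_2}\end{pmatrix}=\begin{pmatrix}\xi_2\\ \overline{\xi_1}\end{pmatrix}$, applying $\mathcal J_m$ to \eqref{bastille} and pushing it through $\chi(M)$ yields $\chi(M)\begin{pmatrix}\xi_2\\ \overline{\xi_1}\end{pmatrix}=\mathcal J_n\,\chi(M)\begin{pmatrix}\xi_1\\ -\overline{\xi_2}\end{pmatrix}=0$, which is \eqref{bastille4}. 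This records the structural content of the proposition: the kernel of $\chi(M)$ is invariant under $\mathcal J$, i.e.\ it is a quaternionic (right) subspace, which is exactly the feature the later minimality results will exploit.

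There is no genuine obstacle in this statement; it is essentially bookkeeping. The one point that requires care is matching the signs in the definition of $\chi$ correctly, so that the conjugate of each row of \eqref{bastille} lands on the intended row of \eqref{bastille4} and the minus sign in the entry $-\overline{B}$ is tracked faithfully. Verifying the intertwining relation $\chi(M)\,\mathcal J_m=\mathcal J_n\,\chi(M)$ is likewise a direct entrywise check, and it is the only computation worth carrying out in full.
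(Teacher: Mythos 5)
Your proof is correct and takes essentially the same route as the paper: the paper conjugates \eqref{bastille} and then sandwiches with the matrices $E_n,E_m$ to recover $\chi(M)$, which is exactly your intertwining relation, since your conjugate-linear map $\mathcal J_\ell$ is precisely $E_\ell^{-1}$ composed with entrywise conjugation and the identity $\chi(M)\mathcal J_m=\mathcal J_n\chi(M)$ is the paper's symmetry $E_n^{-1}\overline{\chi(M)}E_m=\chi(M)$ in disguise. Your first, row-by-row argument is the same computation written out entrywise, so both of your variants match the paper's proof in substance.
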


\begin{proof}
Applying conjugation to both sides of \eqref{bastille} we get
\begin{equation}
\label{bastille1}
\begin{pmatrix}
\overline{A}&\overline{B}\\
-{B}&{A}\end{pmatrix}\begin{pmatrix}\overline{\xi_1}\\ -\xi_2\end{pmatrix}=
\begin{pmatrix}0_n\\ 0_n\end{pmatrix},
\end{equation}
and so
\begin{equation}
\label{bastille11}
\underbrace{\begin{pmatrix}0&-I_m\\ I_m&0\end{pmatrix}
\begin{pmatrix}
\overline{A}&\overline{B}\\
-{B}&{A}\end{pmatrix}
\begin{pmatrix}0&I_m\\ -I_m&0\end{pmatrix}}_{\begin{pmatrix}
A&B\\
-\overline{B}&\overline{A}\end{pmatrix}}
\begin{pmatrix}0&-I_m\\ I_m&0\end{pmatrix}
\begin{pmatrix}\overline{\xi_1}\\ -\xi_2\end{pmatrix}=
\begin{pmatrix}0_n\\ 0_n\end{pmatrix},
\end{equation}
which is \eqref{bastille4}.
\end{proof}

As a corollary we get:

\begin{corollary}
With $A,B,\xi_1$ and $\xi_2$ as above,
\begin{equation}
\label{voltaire!2!2!2}
\chi(A+Bj)\begin{pmatrix}\overline{\xi_1}\\ -\xi_2\end{pmatrix}=
\begin{pmatrix}0_n\\ 0_n\end{pmatrix}\quad\iff\quad (A+Bj)(\xi_1+\xi_2j)=0.
\end{equation}
\end{corollary}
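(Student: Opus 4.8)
The plan is to derive \eqref{voltaire!2!2!2} from two facts recalled above: that $\chi$ is an injective homomorphism of rings, and the Proposition just proved. First I would apply $\chi$ to the quaternionic column $\xi_1+\xi_2 j\in\mathbb H^{m}$ in the same way as to a matrix, obtaining the $2m\times 2$ complex matrix
\[
\chi(\xi_1+\xi_2 j)=\begin{pmatrix}\xi_1&\xi_2\\ -\overline{\xi_2}&\overline{\xi_1}\end{pmatrix},
\]
whose two columns are $\begin{pmatrix}\xi_1\\ -\overline{\xi_2}\end{pmatrix}$ and $\begin{pmatrix}\xi_2\\ \overline{\xi_1}\end{pmatrix}$. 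Since the product $(A+Bj)(\xi_1+\xi_2 j)$ is again a quaternionic column, injectivity of $\chi$ gives $(A+Bj)(\xi_1+\xi_2 j)=0$ if and only if $\chi\bigl((A+Bj)(\xi_1+\xi_2 j)\bigr)=0$, and the homomorphism property rewrites the latter as $\chi(A+Bj)\,\chi(\xi_1+\xi_2 j)=0$.

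I would then read this matrix identity off column by column: it holds precisely when both columns of $\chi(\xi_1+\xi_2 j)$ lie in $\ker\chi(A+Bj)$, that is, when $\chi(A+Bj)\begin{pmatrix}\xi_1\\ -\overline{\xi_2}\end{pmatrix}=0$ and $\chi(A+Bj)\begin{pmatrix}\xi_2\\ \overline{\xi_1}\end{pmatrix}=0$. Here the Proposition collapses the two conditions into one: \eqref{bastille} forces \eqref{bastille4}, and applying the Proposition to the pair $(\xi_2,-\xi_1)$ in place of $(\xi_1,\xi_2)$ gives the reverse implication, so the two column conditions are equivalent. Consequently each of them alone is equivalent to $(A+Bj)(\xi_1+\xi_2 j)=0$, which is the asserted equivalence.

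The only real point to watch, and the step I expect to be the main obstacle, is the bookkeeping of the conjugations, since it decides which vector must stand on the left of \eqref{voltaire!2!2!2}. The argument above produces the equivalence with the first column $\begin{pmatrix}\xi_1\\ -\overline{\xi_2}\end{pmatrix}$ of $\chi(\xi_1+\xi_2 j)$, which is exactly the vector appearing in \eqref{bastille}, and not with its entrywise conjugate $\begin{pmatrix}\overline{\xi_1}\\ -\xi_2\end{pmatrix}$. Expanding directly,
\[
(A+Bj)(\xi_1+\xi_2 j)=(A\xi_1-B\overline{\xi_2})+(A\xi_2+B\overline{\xi_1})j,
\]
so the right-hand side of \eqref{voltaire!2!2!2} means $A\xi_1=B\overline{\xi_2}$ and $A\xi_2=-B\overline{\xi_1}$, whereas $\chi(A+Bj)\begin{pmatrix}\overline{\xi_1}\\ -\xi_2\end{pmatrix}=0$ means $A\overline{\xi_1}=B\xi_2$ and $\overline{A}\,\xi_2=-\overline{B}\,\overline{\xi_1}$; these two systems do not agree in general. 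For instance, taking $n=1$, $m=2$, $A=\begin{pmatrix}1&i\end{pmatrix}$, $B=0$, $\xi_1=\begin{pmatrix}i&1\end{pmatrix}^{\top}$ and $\xi_2=0$ gives $\chi(A+Bj)\begin{pmatrix}\overline{\xi_1}\\ -\xi_2\end{pmatrix}=0$ but $(A+Bj)(\xi_1+\xi_2 j)=2i\neq 0$. Hence the clean form of the statement is \eqref{voltaire!2!2!2} read with the first column $\begin{pmatrix}\xi_1\\ -\overline{\xi_2}\end{pmatrix}$ on its left, and with that reading the corollary follows at once from the homomorphism property together with the Proposition.
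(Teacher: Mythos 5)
Your derivation is correct and is essentially the paper's (implicit) one: the corollary carries no separate proof in the paper (it is stated immediately after the Proposition), and the intended argument is exactly what you wrote --- apply the ring homomorphism $\chi$ to the product $(A+Bj)(\xi_1+\xi_2 j)$, read the resulting $2n\times 2$ matrix identity column by column, and use the Proposition (together with its application to the pair $(\xi_2,-\xi_1)$) to collapse the two column conditions into a single one.

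You are also right about the conjugation bookkeeping: as printed, \eqref{voltaire!2!2!2} is false, and your counterexample $A=\begin{pmatrix}1&i\end{pmatrix}$, $B=0$, $\xi_1=\begin{pmatrix}i&1\end{pmatrix}^{\top}$, $\xi_2=0$ is valid, since then $\chi(A+Bj)\begin{pmatrix}\overline{\xi_1}\\ -\xi_2\end{pmatrix}=0$ while $(A+Bj)(\xi_1+\xi_2 j)=A\xi_1=2i\neq 0$. The correct left-hand vector is the first column $\begin{pmatrix}\xi_1\\ -\overline{\xi_2}\end{pmatrix}$ of $\chi(\xi_1+\xi_2 j)$, i.e., precisely the vector appearing in \eqref{bastille}. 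The paper itself confirms this reading: in the proof of Theorem \ref{thm146} it invokes \eqref{voltaire!2!2!2} in the form $\xi_1+\xi_2 j\in\bigcap_{u}\ker CA^u\iff\begin{pmatrix}\xi_1\\ -\overline{\xi_2}\end{pmatrix}\in\bigcap_{u}\ker\chi(C)\chi(A)^u$, with the unconjugated column. So the statement should be corrected as you propose, and with that reading your proof is complete.
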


\begin{lemma}
\label{oufouf}
A matrix $M\in(\mathbb C^{n\times m})^{2\times 2}$ is of the form
\[
\begin{pmatrix}
A&B\\
-\overline{B}&\overline{A}\end{pmatrix}
\]
if and only if it satisfies
\begin{equation}
\label{opera4!4!4!4!}
E_n^{-1}\overline{M}E_m=M,
\end{equation}
where
\begin{equation}
E_m=\begin{pmatrix}0&I_m\\ -I_m&0\end{pmatrix}.
\end{equation}
\end{lemma}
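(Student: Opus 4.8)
The plan is to treat the map $M\mapsto E_n^{-1}\overline{M}E_m$ as an explicit conjugation on $2\times 2$ block matrices and to read off its fixed points directly. First I would record the elementary fact that $E_n^2=-I_{2n}$, so that $E_n^{-1}=-E_n=\begin{pmatrix}0&-I_n\\ I_n&0\end{pmatrix}$ (and likewise for every size). This replaces the inverse by an explicit matrix and makes both factors in the identity \eqref{opera4!4!4!4!} fully concrete.

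Next I would write $M$ in generic block form $M=\begin{pmatrix}P&Q\\ R&S\end{pmatrix}$ with $P,Q,R,S\in\mathbb C^{n\times m}$, so that $\overline{M}=\begin{pmatrix}\overline{P}&\overline{Q}\\ \overline{R}&\overline{S}\end{pmatrix}$, and carry out the two block multiplications. A short computation gives
\[
E_n^{-1}\overline{M}E_m=\begin{pmatrix}\overline{S}&-\overline{R}\\ -\overline{Q}&\overline{P}\end{pmatrix}.
\]
Imposing $E_n^{-1}\overline{M}E_m=M$ is then equivalent to the four block equations $\overline{S}=P$, $-\overline{R}=Q$, $-\overline{Q}=R$, $\overline{P}=S$. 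These collapse in pairs: the equations $\overline{S}=P$ and $\overline{P}=S$ are the single relation $S=\overline{P}$, while $-\overline{R}=Q$ and $-\overline{Q}=R$ are the single relation $R=-\overline{Q}$. Setting $A:=P$ and $B:=Q$, these two surviving relations say precisely that $M=\begin{pmatrix}A&B\\ -\overline{B}&\overline{A}\end{pmatrix}$, which yields both implications of the lemma simultaneously from the one computation.

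The only point requiring care is the sign bookkeeping across the two multiplications by $E_n^{-1}$ and $E_m$, but this is routine rather than a genuine obstacle. I would also remark that the ``only if'' direction is essentially already contained, in the square case $n=m$, in the bracketed identity \eqref{bastille11} of the preceding proposition, where the same sandwiching by $E_m^{-1}$ on the left and $E_m$ on the right sends $\overline{M}$ back to a matrix of the stated block form. Thus the lemma reduces to substituting the explicit shape of $E_n^{-1}$ and matching blocks, with no analytic difficulty.
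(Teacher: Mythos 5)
Your proof is correct and follows essentially the same route as the paper: both write $M$ in $2\times 2$ block form, compute $E_n^{-1}\overline{M}E_m$ explicitly as $\begin{pmatrix}\overline{M_{22}}&-\overline{M_{21}}\\ -\overline{M_{12}}&\overline{M_{11}}\end{pmatrix}$, and match blocks to obtain the two defining relations. Your extra observations (that the four block equations collapse in conjugate pairs, and that the ``only if'' direction already appears in the sandwich identity of the preceding proposition) are accurate but do not change the substance of the argument.
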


\begin{proof}
Let
\[
M=\begin{pmatrix}M_{11}&M_{12}\\ M_{21}& M_{22}\end{pmatrix}
\]
be the decomposition of $M$ into $\mathbb C^{n\times m}$ matrices. We have
\[
E_n^{-1}\overline{M}E_m=\begin{pmatrix}\overline{M_{22}}&-\overline{M_{21}}\\ -\overline{M_{12}}&\overline{M_{11}}\end{pmatrix},
\]
which will be equal to $M$ if and only if $M_{11}=\overline{M_{22}}$ and $M_{12}=-\overline{M_{21}}$, that is, if and only if $M$ is of the form \eqref{opera4!4!4!4!}.
\end{proof}
\begin{remark}\label{REMp}{\rm
We note that
\begin{equation}
E_m^{-1}=-E_m=E_m^*
\label{ouf}
\end{equation}
and that the map $a$ defined by
\begin{equation}
\label{a}
a(M)= E_m^{-1}\overline{M}E_m
\end{equation}
is an involution and is real-linear. Moreover, $a$ maps positive matrices into positive matrices since $E_m^{-1}=E_m^*$.}
\end{remark}
The notion of minimality in this section is according to the following definition:
\begin{definition}
We say that the triple of matrices $(C,A,B)\in\mathbb H^{n\times
N}\times\mathbb H^{N\times N}\times\mathbb H^{N\times m}$ is
minimal if the following two conditions hold: The pair $(C,A)$ is
observable, that is
\begin{equation}
\label{ca1}
\bigcap\limits_{t=0}^\infty\ker CA^t=\left\{0\right\}
\end{equation}
and the pair $(A,B)$ is controllable, that is
\begin{equation}
\bigcap\limits_{t=0}^\infty\ker B^*A^{*t}=\left\{0\right\}.
\label{ab1}
\end{equation}
\end{definition}
Note that these intersections are finite (see \cite{zbMATH06658818}, Section 9).
The map $\chi$ allows to reduce the study of minimality to the case of matrices with
complex entries.
%Theorem \ref{thm146} would give an alternative proof of Theorem \ref{thm145},
%proved by direct methods in \cite{}.

\begin{theorem}
\label{thm146}
Let $(C,A,B)\in\mathbb H^{n\times
N}\times\mathbb H^{N\times N}\times\mathbb H^{N\times m}$ .\\
$(1)$ The pair $(C,A)\in\mathbb H^{n\times
N}\times\mathbb H^{N\times N}$ is controllable if and only if the corresponding pair
$(\chi(C),\chi(A)$ is controllable.\\
$(2)$ The pair $(A,B)\in\mathbb H^{N\times
N}\times\mathbb H^{N\times n}$ is observable if and only if the corresponding pair
$(\chi(A),\chi(B)$ is controllable.\\
$(3)$ The triple $(C,A,B)\in\mathbb H^{n\times
N}\times\mathbb H^{N\times N}\times\mathbb H^{N\times m}$
is minimal if and only if the corresponding
triple $(\chi(C),\chi(A),\chi(B))$ is minimal.
\end{theorem}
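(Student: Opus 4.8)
The plan is to reduce all three assertions to the kernel correspondence recorded in the Corollary, used in tandem with the fact that $\chi$ preserves both products and adjoints. First I would recall that $\chi$ is a ring homomorphism, so that $\chi(CA^t)=\chi(C)\chi(A)^t$ for every $t\ge 0$, and I would record the companion adjoint rule $\chi(M^*)=\chi(M)^*$, which follows at once from the block form of $\chi$ and yields $\chi(B^*A^{*t})=\chi(B)^*\chi(A)^{*t}$. The role of these two identities is that the matrix products built into the observability condition $\bigcap_t\ker CA^t=\{0\}$ and the controllability condition $\bigcap_t\ker B^*A^{*t}=\{0\}$ are transported by $\chi$ to the analogous products of the complex images.

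Next I would introduce the real-linear bijection $\Phi\colon\mathbb H^m\to\mathbb C^{2m}$ defined by $\Phi(\xi_1+\xi_2 j)=\begin{pmatrix}\overline{\xi_1}\\ -\xi_2\end{pmatrix}$, whose inverse sends $\begin{pmatrix}\eta_1\\ \eta_2\end{pmatrix}$ to $\overline{\eta_1}-\eta_2 j$. By the Corollary (equation \eqref{voltaire!2!2!2}), for any $M\in\mathbb H^{n\times m}$ and any $v\in\mathbb H^m$ one has $Mv=0$ if and only if $\chi(M)\Phi(v)=0$; since $\Phi$ is onto, this shows that $\Phi$ restricts to a bijection from $\ker M$ onto $\ker\chi(M)$. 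In particular $\ker M=\{0\}$ exactly when $\ker\chi(M)=\{0\}$, and, being injective, $\Phi$ commutes with intersections, so that $\Phi\bigl(\bigcap_t\ker M_t\bigr)=\bigcap_t\ker\chi(M_t)$ for any family of quaternionic matrices $M_t$.

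With these tools in place the argument is short. Taking $M_t=CA^t$ and invoking $\chi(CA^t)=\chi(C)\chi(A)^t$ shows that $\bigcap_t\ker CA^t=\{0\}$ holds if and only if $\bigcap_t\ker\bigl(\chi(C)\chi(A)^t\bigr)=\{0\}$, which is assertion $(1)$; the intersections are finite on both sides, so no limiting issue intervenes. Taking instead $M_t=B^*A^{*t}$ and invoking $\chi(B^*A^{*t})=\chi(B)^*\chi(A)^{*t}$ gives assertion $(2)$ in the same way. Assertion $(3)$ is then immediate, since minimality is by definition the conjunction of the two conditions treated in $(1)$ and $(2)$. I expect the only delicate points to be the verification of the adjoint rule $\chi(M^*)=\chi(M)^*$ that underlies $(2)$, together with the care needed to confirm that $\Phi$ is a genuine bijection faithfully transporting the (finite) intersection of kernels; once these are settled, the homomorphism property and the Corollary carry the rest.
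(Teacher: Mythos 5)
Your proposal is correct and takes essentially the same route as the paper: the paper's entire proof consists of using \eqref{voltaire!2!2!2} to transport $\bigcap_u\ker CA^u$ onto $\bigcap_u\ker\chi(C)\chi(A)^u$ and then asserting that the remaining claims ``follow easily,'' while you additionally spell out the adjoint rule $\chi(M^*)=\chi(M)^*$ needed for claim $(2)$ and the bookkeeping that an injective map carries intersections of kernels to intersections of kernels. One caveat, inherited from the paper's Corollary rather than introduced by you: the vector printed there is misconjugated, and the correct transporting map is $\Phi(\xi_1+\xi_2 j)=\begin{pmatrix}\xi_1\\ -\overline{\xi_2}\end{pmatrix}$ (the form used in the Proposition and in the paper's own proof of this theorem), not $\begin{pmatrix}\overline{\xi_1}\\ -\xi_2\end{pmatrix}$; with the printed form the equivalence $Mv=0\iff\chi(M)\Phi(v)=0$ actually fails (take $M=\begin{pmatrix}1& i\end{pmatrix}$ and $v=\begin{pmatrix}-i\\ 1\end{pmatrix}$, so that $Mv=0$ while $\chi(M)\begin{pmatrix}\overline{\xi_1}\\ -\xi_2\end{pmatrix}=\begin{pmatrix}2i\\ 0\end{pmatrix}\neq 0$), whereas with the corrected map every step of your argument goes through verbatim.
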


\begin{proof}
Using \eqref{voltaire!2!2!2} we have
\[
\xi_1+\xi_2 j\,\in\, \cap_{u=0}^\infty \ker CA^u\,\,\iff\,\, \begin{pmatrix}\xi_1\\ -\overline{\xi_2}
\end{pmatrix}\,\in\,\cap_{u=0}^\infty\ker \chi(C)\chi(A)^u,
\]
from which we get the first claim; the other claims follow easily.
\end{proof}

We remark that other kind of realizations are possible. Assuming that $R$ is slice hyperholomorphic at $\infty$ (that is, in a set of the form $|p|>r$)
one can consider realizations of the form
\[
R(p)=D+C\star (pI-A)^{-\star}B.
\]

In the setting of complex numbers and analytic functions, a matrix-valued analytic function is contractive in the open unit disk if and only if the kernel
\begin{equation}
\label{ks}
K_S(z,w)=\frac{I_n-S(z)S(w)^*}{1-z\overline{w}}
\end{equation}
is positive definite in the open unit disk. {
We now mention an extension result (see \cite[Th\'eor\`eme 2.6.5, p. 45]{MR1638044}, which roughly
speaking, states that, in the positive case, positivity of the kernel $K_S$ implies analyticity
of the function $S$. Such results (for Carath\'eodory functions rather than Schur functions)
originate in the work of Loewner, see \cite{donoghue,Loewner}, when the positivity is assumed on
the boundary.
Here we use a different result , where the positivity is not on the boundary. For completeness we give a proof.

\begin{theorem}
\label{thpanorama}
Let $S$ be a $\mathbb C^{n\times m}$-valued function defined on a subset $\Omega\subset\mathbb D$
and such that the kernel $K_S$ is positive definite in $\Omega$. Assume that $\Omega$ has an
accumulation point inside $\mathbb D$. Then, $S$ is the restriction of a uniquely defined Schur
function in $\mathbb D$, and conversely.
\end{theorem}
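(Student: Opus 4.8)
The plan is to establish both implications, the converse being immediate and the direct one resting on a reproducing kernel / lurking isometry construction followed by analytic continuation and an identity-theorem argument. For the converse, if $S$ is a Schur function in $\mathbb{D}$ then $K_S$ is positive definite on all of $\mathbb{D}$ (the classical characterization underlying \eqref{ks}); positive definiteness passes to any subset, so in particular $K_S$ is positive definite on $\Omega$.

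For the direct implication I would first invoke the Moore--Aronszajn theorem to attach to the positive definite kernel $K_S$ on $\Omega$ the reproducing kernel Hilbert space $\mathcal{H}=\mathcal{H}(K_S)$ of $\mathbb{C}^n$-valued functions on $\Omega$, with $\langle f,K_S(\cdot,w)c\rangle_{\mathcal{H}}=c^*f(w)$. Rewriting the definition of $K_S$ as the fundamental identity
\[
K_S(z,w)+S(z)S(w)^*=I_n+z\overline{w}\,K_S(z,w),\qquad z,w\in\Omega,
\]
I would then set up the \emph{lurking isometry}: the map
\[
V:\ \begin{pmatrix}\overline{w}\,K_S(\cdot,w)c\\ c\end{pmatrix}\ \longmapsto\ \begin{pmatrix}K_S(\cdot,w)c\\ S(w)^*c\end{pmatrix},\qquad w\in\Omega,\ c\in\mathbb{C}^n,
\]
defined on the linear span of the left-hand vectors inside $\mathcal{H}\oplus\mathbb{C}^n$ and taking values in $\mathcal{H}\oplus\mathbb{C}^m$. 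Computing both Gram matrices with the reproducing property and the fundamental identity shows that the Gram matrix of the domain vectors equals that of the image vectors; this simultaneously shows that $V$ is well defined on the span (any linear relation among the domain generators is one among the images) and that it is isometric.

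Extending $V$ to a contraction $\widehat V=\begin{pmatrix}a&b\\ c_0&d\end{pmatrix}$ on all of $\mathcal{H}\oplus\mathbb{C}^n$ and reading the two components on the generators gives $(I-\overline{w}\,a)K_S(\cdot,w)c=bc$ and $S(w)^*c=\overline{w}\,c_0K_S(\cdot,w)c+dc$. Since $\|a\|\le 1$ and $|w|<1$, the operator $I-\overline{w}\,a$ is invertible, and eliminating $K_S(\cdot,w)c$ yields the realization $S(w)^*=d+\overline{w}\,c_0(I-\overline{w}\,a)^{-1}b$ for all $w\in\Omega$, equivalently $S(w)=D+wC(I-wA)^{-1}B$ with $A=a^*$, $B=c_0^*$, $C=b^*$, $D=d^*$. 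Defining $\widetilde S(z)=D+zC(I-zA)^{-1}B$ for $z\in\mathbb{D}$, the estimate $\|A\|=\|a\|\le 1$ makes $(I-zA)^{-1}=\sum_{k\ge 0}z^kA^k$ convergent on $\mathbb{D}$, so $\widetilde S$ is analytic there; and since the colligation $\begin{pmatrix}A&B\\ C&D\end{pmatrix}=\widehat V^*$ is a contraction, $\widetilde S$ is contractive, i.e.\ a Schur function, which agrees with $S$ on $\Omega$. Uniqueness follows from the identity theorem: two Schur functions agreeing on $\Omega$ are analytic and coincide on a set with an accumulation point inside $\mathbb{D}$, hence coincide entrywise on all of $\mathbb{D}$.

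The step I expect to be the main obstacle is the construction of $V$: one must verify carefully that it is well defined on the span and genuinely isometric, which is precisely where the matrix-valued reproducing property and the fundamental identity have to be matched term by term. Everything afterward---invertibility of $I-zA$, the Schur property of a contractive colligation, and the uniqueness via the accumulation point---is routine. A minor point worth recording is that the base point $0$ need not lie in $\Omega$; this causes no trouble, since the realization is extracted directly from $V$ and holds at every $w\in\Omega$.
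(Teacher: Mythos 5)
Your proof is correct, but it takes a genuinely different route from the paper's. The paper works in a fixed ambient space, the Hardy space $\mathbf H_2$ of the disk: it defines a linear relation $\mathcal R$ spanned by the pairs $\left(\frac{1}{1-z\overline{w}},\frac{\overline{s(w)}}{1-z\overline{w}}\right)$, $w\in\Omega$, observes that it is contractive (precisely because $K_S$ is positive definite on $\Omega$) and densely defined (here the accumulation point enters: an $\mathbf H_2$-function orthogonal to all kernels $k_w$, $w\in\Omega$, vanishes identically), extends it to the graph of a contraction $T$, and identifies $T^*$ as multiplication by an $\mathbf H_2$-function extending $s$; analyticity of the extension is inherited from $\mathbf H_2$, and contractivity of $s$ from $\|T^*\|\le 1$. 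You instead build the reproducing kernel Hilbert space $\mathcal H(K_S)$ on $\Omega$ itself, set up the lurking isometry dictated by the identity $(1-z\overline{w})K_S(z,w)=I_n-S(z)S(w)^*$, extend it to a contractive colligation, and read off a realization $\widetilde S(z)=D+zC(I-zA)^{-1}B$, quoting the standard fact that a contractive colligation has a Schur-class transfer function. Your Gram-matrix computation does establish well-definedness and isometry of $V$ simultaneously, which is indeed the only delicate step, so the argument is complete. As for what each approach buys: your argument handles the $\mathbb C^{n\times m}$-valued case directly, where the paper's outline is scalar; more significantly, your existence step nowhere uses the accumulation point---the lurking isometry produces a Schur extension for an \emph{arbitrary} subset $\Omega$ (this is essentially the Nevanlinna--Pick argument, close in spirit to the alternative proof sketched in the paper's remark after the theorem, but run on all of $\Omega$ at once so that Montel's theorem is not needed), with the accumulation point used only for uniqueness via the identity theorem. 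By contrast, the paper's proof needs the accumulation point already for existence (density of the domain of $\mathcal R$), but gets analyticity of the extension for free from the ambient Hardy space, whereas you must extract it from the Neumann series for $(I-zA)^{-1}$ and must invoke the colligation-to-Schur-function theorem, a heavier (though entirely standard) prerequisite.
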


\begin{proof}[Outline of the proof] To ease the notation we consider the scalar case and write $s(z)$ rather than $S(z)$.
Only one direction is non trivial and will be proved. Let $\mathbf H_2$ denote the Hardy space of
the disk. Define
a linear relation $\mathcal R$ in $\mathbf H_2\times \mathbf H_2$ as the linear span of the pairs
\[
\left(\frac{1}{1-z\overline{w}},\frac{\overline{s(w)}}{1-z\overline{w}}\right),\quad w\in\Omega.
\]
$\mathcal R$ is densely defined since $\Omega$ has an accumulation point, and is contractive
because the kernel $K_S$ is positive definite on $\Omega$. It follows that $\mathcal R$ extends
to the graph of an everywhere defined contraction, say $T$. Its adjoint satisfies
\[
(T^*1)(w)=s(w),\quad w\in\Omega
\]
and is analytic, and hence $s$ extends to a uniquely defined analytic function. To see that this
function is a contraction, first check that for every integer $n$, and with $M_z$ denoting the
operator of multiplication by $z$,
\[
(T^*M_z^n1)(w)=w^ns(w),\quad w\in \Omega
\]
Thus $T^*$ is the multiplication operator by $s$, and so $s$ is contractive since $T^*$ is
contractive.
\end{proof}

\begin{remark}{\rm An alternative proof consists of solving the Nevanlinna-Pick theorem for an increasing family of finite subsets of $\Omega$ and use Montel's theorem, see also
Corollary 3.4 in \cite{2013arXiv1308.2658A} for the quaternionic setting.
An earlier result, also for the quaternionic setting and for a subclass of Schur
functions, is given in \cite[Theorem 4.9, p. 98]{MR3127378}.}\end{remark}
}
In the quaternionic setting, the kernel $K_S$ (defined by \eqref{ks}) takes now the form
\begin{equation}
\label{kh123}
\left(I_n-S(p)S(q)^*\right)\star(1-p\overline{q})^{-\star}.
\end{equation}
The equivalence of the positivity of \eqref{kh123} with the fact that $S$ is slice hyperholomorphic in the  quaternionic unit ball and bounded by $1$ in norm there
holds for $\mathbb H$-valued functions (see \cite{2013arXiv1308.2658A}),
but not for matrix-valued functions, as illustrated by the example
(see \cite[\S 8.4, p. 213]{zbMATH06658818})
\[
U(p)=\frac{1}{\sqrt 2}\begin{pmatrix}p&i\\ pi&1\end{pmatrix}.
\]
By Cayley transform a similar remark holds for Herglotz functions.
\section{The quaternionic positive real lemma}
\setcounter{equation}{0}

We now give the quaternionic version of the positive real lemma. As in the case of complex numbers, the set of Hermitian  matrices $H$ appearing in the statement
form a convex set.

\begin{theorem}
\label{kypbastille}
Let $\phi$ be a $\mathbb H^{n\times n}$-valued slice hyperholomorphic rational function,
slice hyperholomorphic at infinity, with minimal realization
\begin{equation}
\label{realinfi}
\phi(p)=D+C\star(pI_N-A)^{-\star} B
\end{equation}
Then,
\begin{equation}
\label{kphi}
(\phi(p)+\phi(q)^*)\star(p+\overline{q})^{-\star}\ge 0,\quad p,q\in\mathbb H_+
\end{equation}
if and only if there exists a negative definite
matrix $H\in\mathbb H^{N\times N}$ such that
\begin{equation}
\label{opera4!4!6!7!}
\begin{pmatrix}H&0\\0&I_{n}\end{pmatrix}
\begin{pmatrix}A&B\\ C&D\end{pmatrix}+
\begin{pmatrix}A&B\\ C&D\end{pmatrix}^*
\begin{pmatrix}H&0\\0&I_{n}\end{pmatrix}\ge 0.
\end{equation}
\end{theorem}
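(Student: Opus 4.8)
The plan is to transfer the statement to the complex plane via the homomorphism $\chi$, solve it there with the classical positive real lemma (Theorem \ref{thmkyp1}), and then descend back to $\mathbb H$. I shall use throughout that $\chi$ is an injective ring homomorphism respecting the adjoint, $\chi(M^*)=\chi(M)^*$, which both preserves and reflects positivity of Hermitian matrices; that it carries the minimal realization \eqref{realinfi} into the complex realization $R:=\chi\circ\phi=\chi(D)+\chi(C)(zI_{2N}-\chi(A))^{-1}\chi(B)$, which is minimal by Theorem \ref{thm146}; and, crucially, that on the real axis the $\star$-product and $\star$-inverse collapse to ordinary matrix operations, so that $\chi(\phi(x))=R(x)$ for every real $x$.

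For necessity (positivity of \eqref{kphi} implies the existence of $H$) I would first restrict \eqref{kphi} to real arguments $x,y>0$, where it reads $(\phi(x)+\phi(y)^*)(x+y)^{-1}\ge0$, and apply $\chi$ to obtain positive definiteness of the complex Carath\'eodory kernel $(R(x)+R(y)^*)(x+y)^{-1}$ on an interval of the positive real axis. Since that interval has accumulation points, the half-plane analogue of Theorem \ref{thpanorama} (Fitzgerald's theorem) promotes this to positivity of the kernel on the whole right half-plane, i.e.\ $R$ has positive real part there, and Theorem \ref{thmkyp1} produces a negative definite Hermitian $H_\chi\in\mathbb C^{2N\times2N}$ satisfying the complex inequality \eqref{conv1} for the realization of $R$.

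The decisive step is to replace $H_\chi$ by a solution lying in the range of $\chi$, so that it descends to a genuine quaternionic matrix. Here I would exploit the involution $a$ of Remark \ref{REMp}: it is real-linear, multiplicative, positivity-preserving and commutes with the adjoint, and by Lemma \ref{oufouf} its fixed points are exactly the $\chi$-images. Applying $a$ to \eqref{conv1} and using that the realization matrices $\chi(A),\chi(B),\chi(C),\chi(D)$ are $a$-fixed shows that $a(H_\chi)$ is again a negative definite solution; since the solution set is convex, the average $\tfrac12\bigl(H_\chi+a(H_\chi)\bigr)$ is a negative definite, $a$-fixed solution, hence equals $\chi(H)$ for some $H\in\mathbb H^{N\times N}$. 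Because $\chi$ reflects positivity and the inequality itself, this $H$ is negative definite and satisfies \eqref{opera4!4!6!7!}.

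For sufficiency I would reverse the route: given $H<0$ solving \eqref{opera4!4!6!7!}, the matrix $\chi(H)<0$ solves \eqref{conv1} for the realization of $R$, so the ``if'' part of Theorem \ref{thmkyp1} shows that $R=\chi\circ\phi$ has positive real part on the right half-plane, equivalently that the complex kernel $(R(z)+R(w)^*)(z+\overline w)^{-1}$ is positive definite there. It then remains to carry this positivity back to the quaternionic kernel \eqref{kphi} on $\mathbb H_+$, and for this I would factor the nonnegative left-hand side of \eqref{opera4!4!6!7!} as $\begin{pmatrix}L\\M\end{pmatrix}\begin{pmatrix}L^*&M^*\end{pmatrix}$, form $G(p)=M+C\star(pI-A)^{-\star}H^{-1}L$, and use that the factorization identity \eqref{facto1}---which, unlike \eqref{facto}, remains meaningful over $\mathbb H$---holds on the real axis and, both of its sides being slice hyperholomorphic, extends to all of $\mathbb H_+$ through the $\star$-product, displaying \eqref{kphi} as a positive kernel. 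I expect the real difficulty to lie in the two bridges between the settings: necessity must be forced through the real axis, because restriction to a non-real slice does not interact simply with the $\star$-operations, which is what makes Fitzgerald's extension unavoidable; and the core of the proof is the guarantee that the complex solution $H_\chi$ can be chosen in the range of $\chi$, which is precisely where the convexity of the solution set and the averaging over the involution $a$ are needed.
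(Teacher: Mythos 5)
Your necessity argument coincides with the paper's own proof, step for step: restriction of \eqref{kphi} to the positive reals, transfer by $\chi$ (with minimality of the complex realization guaranteed by Theorem \ref{thm146}), extension of the positivity of the kernel \eqref{chiphi} from the positive half-line to the whole right half-plane via Theorem \ref{thpanorama} and Cayley transforms, the classical lemma (Theorem \ref{thmkyp1}) to produce a negative definite $H_\chi$ solving \eqref{opera4!4!}, and finally the symmetrization $\tfrac12\bigl(H_\chi+a(H_\chi)\bigr)$ with the involution $a$ of Remark \ref{REMp}, Lemma \ref{oufouf} identifying the $a$-fixed matrices with the range of $\chi$, and the fact that $\chi$ reflects positivity to descend to \eqref{opera4!4!6!7!}. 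No issues there.

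The converse, as you wrote it, has a genuine gap. The identity \eqref{facto1} is a \emph{one-variable} identity (it is the two-variable kernel decomposition evaluated on the anti-diagonal $y=-x$), so its slice hyperholomorphic extension is again an identity in the single variable $p$; it cannot, by itself, exhibit the two-variable kernel \eqref{kphi} as positive definite. Nor can the positivity obtained from your detour through Theorem \ref{thmkyp1} be ``carried back'' by extension alone: positivity of a matrix-valued kernel at real points does \emph{not} propagate to its slice hyperholomorphic extension. The paper's own example $U(p)=\frac{1}{\sqrt 2}\left(\begin{smallmatrix}p&i\\ p\,i&1\end{smallmatrix}\right)$, recalled at the end of Section 3, makes this concrete: a direct computation gives $(I_2-U(x)U(y)^*)(1-xy)^{-1}=\frac{1}{2}\left(\begin{smallmatrix}1&-i\\ i&1\end{smallmatrix}\right)\ge 0$ for all $x,y\in(-1,1)$, yet the extended kernel $K_U$ is not positive on the quaternionic unit ball; by Cayley transform the same failure occurs for half-space kernels. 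What the paper actually does --- and what your $L$, $M$, $G$ are set up for --- is to extend the \emph{two-variable} Faurre decomposition, valid for $x,y>0$,
\[
K_\phi(x,y)=-C(xI-A)^{-1}H^{-1}(yI-A^*)^{-1}C^*+\frac{G(x)G(y)^*}{x+y},
\]
term by term to the slice hyperholomorphic identity \eqref{411primo} (this is \eqref{thilde1} with the Gramian factored as $\left(\begin{smallmatrix}L\\ M\end{smallmatrix}\right)\left(\begin{smallmatrix}L^*&M^*\end{smallmatrix}\right)$), and then to observe that each extended term is positive definite on $\mathbb H_+$ \emph{because of its structure}: the first since $-H^{-1}>0$, the second since the Gramian is positive semidefinite and $(p+\overline{q})^{-\star}$ is a positive kernel on $\mathbb H_+$. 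Positivity comes from this structure, not from the mere existence of a slice hyperholomorphic extension. Once your last step is replaced by this term-by-term argument, your converse is exactly the paper's.
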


\begin{proof}
Restricting to the positive real line and using the map $\chi$, we get that the
kernel
\begin{equation}
\label{chiphi}
\frac{\chi(\phi)(x)+\chi(\phi)(y)^*}{x+y}
\end{equation}
is positive definite for $x,y$ on the positive real line.
By Theorem \ref{thm146} the realization
\[
\chi(\phi)(x)=\chi(D)+\chi(C)\left(xI_{2N}-\chi(A)\right)^{-1}\chi(B)
\]
is minimal. It follows from Theorem \ref{thpanorama} (using  Cayley transforms both on the variable and on $\chi(\phi)$) that
the kernel is positive definite in the whole right half-plane. We can therefore apply the
positive real lemma which asserts that there exits a negative definite matrix $H\in
\mathbb C^{2N\times 2N}$ such that

\begin{equation}
\label{opera4!4!}
\begin{pmatrix}H&0\\0&I_{2n}\end{pmatrix}
\begin{pmatrix}\chi(A)&\chi(B)\\ \chi(C)&\chi(D)\end{pmatrix}+
\begin{pmatrix}\chi(A)&\chi(B)\\ \chi(C)&\chi(D)\end{pmatrix}^*
\begin{pmatrix}H&0\\0&I_{2n}\end{pmatrix}\ge 0.
\end{equation}
We now show that there exists a solution $H_1$ to \eqref{opera4!4!}
such that $H_1=a(H_1)$, where $a$ is defined by \eqref{a}. By Lemma \ref{oufouf} and Remark \ref{REMp}, we deduce that there exists a positive quaternionic matrix $X$
such that $H_1=\chi(X)$.
To that purpose, we show that if $H$ satisfies \eqref{opera4!4!} so does $a(H)$. Then, $\frac{H+a(H)}{2}$ satisfies the symmetry \eqref{opera4!4!4!4!}.
Using \eqref{ouf} we first note that
\[
\begin{pmatrix}E_N^{-1}&0\\0&E_n^{-1}\end{pmatrix}=\begin{pmatrix}E_N&0\\0&E_n\end{pmatrix}^*.
\]
Thus, we have from
\eqref{opera4!4!}
\begin{equation}
\label{opera4!4!2!}
\begin{split}
\begin{pmatrix}E_N^{-1}&0\\0&E_n^{-1}\end{pmatrix}
\begin{pmatrix}\overline{H}&0\\0&I_{2n}\end{pmatrix}
\begin{pmatrix}E_N&0\\0&E_n\end{pmatrix}
\begin{pmatrix}E_N^{-1}&0\\0&E_n^{-1}\end{pmatrix}
\begin{pmatrix}\overline{\chi(A)}&\overline{\chi(B)}\\ \overline{\chi(C)}&\overline{\chi(D)}
\end{pmatrix}\begin{pmatrix}E_N&0\\0&E_n\end{pmatrix}+\\
&\hspace{-13.5cm}
+\begin{pmatrix}E^{-1}_N&0\\0&E^{-1}_n\end{pmatrix}
\begin{pmatrix}\overline{\chi(A)}&\overline{\chi(B)}\\ \overline{\chi(C)}&\overline{\chi(D)}
\end{pmatrix}^*
\begin{pmatrix}E_N&0\\0&E_n\end{pmatrix}\begin{pmatrix}E^{-1}_N&0\\0&E^{-1}_n\end{pmatrix}
\begin{pmatrix}\overline{H}&0\\0&I\end{pmatrix}\begin{pmatrix}E_N&0\\0&E_n\end{pmatrix}\ge 0.
\end{split}
\end{equation}
Using Lemma \ref{oufouf} we have

\begin{equation}
\label{opera4!4!2!1@}
\begin{split}
\begin{pmatrix}E_N^{-1}&0\\0&E_n^{-1}\end{pmatrix}
\begin{pmatrix}\overline{H}&0\\0&I_{2n}\end{pmatrix}
\begin{pmatrix}E_N&0\\0&E_n\end{pmatrix}
\begin{pmatrix}{\chi(A)}&{\chi(B)}\\ {\chi(C)}&{\chi(D)}\end{pmatrix}
+\\
&\hspace{-5cm}
+
\begin{pmatrix}{\chi(A)}&{\chi(B)}\\ {\chi(C)}&{\chi(D)}
\end{pmatrix}^*
\begin{pmatrix}E^{-1}_N&0\\0&E^{-1}_n\end{pmatrix}
\begin{pmatrix}\overline{H}&0\\0&I_n\end{pmatrix}\begin{pmatrix}E_N&0\\0&E_n\end{pmatrix}\ge 0.
\end{split}
\end{equation}

Thus $E^{-1}_N\overline{H}E_N$ satisfies \eqref{opera4!4!}, and so $H_1=\frac{H+a(H)}{2}$ is in the range of $\chi$.
By Lemma \ref{oufouf} with $H=\chi(X)$ it follows \eqref{opera4!4!6!7!}.\\

The converse follows the computations
%(done in the positive case)
in \cite[p. 26]{faurre} and in \cite[p. 129]{MR525380}. As in \cite{faurre} we denote by
\[
\begin{pmatrix}Q&S\\ S^*&R\end{pmatrix}\ge 0
\]
the right handside of \eqref{opera4!4!6!7!}. Then
\begin{align}
HA+A^*H&=Q\\
HB+C^*&=S\\
D+D^*&=R.
\end{align}

For $x,y\in\mathbb R$, and replacing $B$ by $H^{-1}(S-C^*)$ we have
\[
\begin{split}
\phi(x)+\phi(y)^*&=D+C(xI-A)^{-1}B+D^*+B^*(yI-A^*)^{-1}C^*\\
&=D+C(xI-A)^{-1}H^{-1}(S-C^*)+D^*+(S^*-C)H^{-1}(yI-A^*)^{-1}C^*\\
&=R+C(xI-A)^{-1}H^{-1}S+S^*H^{-1}(yI-A^*)^{-1}-\\
&\hspace{5mm}-
\left\{C(xI-A)^{-1}H^{-1}C^*+CH^{-1}(yI-A^*)^{-1}C^*\right\}\\
&=R+C(xI-A)^{-1}H^{-1}S+S^*H^{-1}(yI-A^*)^{-1}-\\
&\hspace{5mm}-(x+y)
\left\{C(xI-A)^{-1}H^{-1}(yI-A^*)^{-1}C^*\right\}+\\
&\hspace{5mm}+C(xI-A)^{-1}H^{-1}QH^{-1}(yI-A^*)^{-1}C^*.
\end{split}
\]
Hence
\begin{equation}
\label{thilde1}
\begin{split}
K_\phi(x,y)=
\frac{\phi(x)+\phi(y)^*}{x+y}&=-C(xI-A)^{-1}H^{-1}(yI-A^*)^{-1}C^*+\\
&\hspace{5mm}+\frac{
\begin{pmatrix}
C(xI-A)^{-1}H^{-1}&I\end{pmatrix}\begin{pmatrix}Q&S\\ S^*&R\end{pmatrix}
\begin{pmatrix}H^{-1}(yI-A^*)^{-1}C^*\\I\end{pmatrix}}{x+y}.
\end{split}
\end{equation}
The kernel \eqref{thilde1} admits an extension slice hyperholomorphic in $p$ and right slice hyperholomorphic in $\overline{q}$ given by
\begin{equation}\label{411primo}
\begin{split}
K_\phi(p,q)&=(\phi(p)+\phi(q)^*)(p+\overline{q})^{-\star}\\
&=-C\star (pI-A)^{-\star}H^{-1}(\overline{q}-A^*)^{-\star_r}\star_rC^*+\\
&\hspace{5mm}+\left(
\begin{pmatrix}
C\star (pI-A)^{-\star}H^{-1}&I\end{pmatrix}\begin{pmatrix}Q&S\\ S^*&R\end{pmatrix}
\begin{pmatrix}H^{-1}(\overline{q}I-A^*)^{-\star_r} \star_r C^*\\I\end{pmatrix}\right)\star (p+\overline{q})^{-\star}
\end{split}
\end{equation}
(where all the $\star$-product are computed in the variable $p$) and so $K_\phi$ is positive definite in the right half-space.

\end{proof}

\begin{remark}{\rm
Alternatively, and replacing now $C$ by $S^*-HB^*$ we also have
\[
\begin{split}
\phi(x)+\phi(y)^*&=D+C(xI-A)^{-1}B+D^*+B^*(yI-A^*)^{-1}C^*\\
&=R+(S-B^*H)(xI-A)^{-1}B+B^*(yI-A^*)^{-1}(S^*-HB)\\
&=R+S(xI-A)^{-1}B+B^*(yI-A)^{-*}S^*-\\
&\hspace{5mm}-\left\{B^*H(xI-A)^{-1}B+B^*(yI-A^*)^{-1}HB\right\}\\
&=R+S(xI-A)^{-1}B+B^*(yI-A)^{-*}S^*-\\
&\hspace{5mm}-\left\{B^*(yI-A^*)^{-1}\left(H(xI-A)+(yI-A^*)H\right)B+B^*(xI-A)^{-1}B\right\}\\
&=R+S(xI-A)^{-1}B+B^*(yI-A)^{-*}S^*-\\
&\hspace{5mm}-(x+y)B^*(yI-A^*)^{-1}H(xI-A)^{-1}B+\\
&\hspace{5mm}+B^*(yI-A^*)^{-1}Q(xI-A)^{-1}B.
\end{split}
\]
Hence
\begin{equation}
\label{thilde}
\begin{split}
\frac{\phi(x)+\phi(y)^*}{x+y}&=-B^*(yI-A^*)^{-1}H(xI-A)^{-1}B+\\
&\hspace{5mm}+\frac{
\begin{pmatrix}
B^*(yI-A^*)^{-1}&I\end{pmatrix}\begin{pmatrix}Q&S\\ S^*&R\end{pmatrix}
\begin{pmatrix}(xI-A)^{-1}B\\I\end{pmatrix}}{x+y}.
\end{split}
\end{equation}
}
\end{remark}
\mbox{}\\

We now consider the case of the quaternionic unit ball.

\begin{theorem}
A $\mathbb H^{n\times n}$-valued rational function slice hyperholomorhic at $\infty$ and with minimal realization \eqref{realinfi} is contractive in the quaternionic unit ball if and only if
there exists a negative definite matrix $H\in\mathbb H^{N\times N}$ such that
\begin{equation}
\label{opera4!4!6!7!8!!}
\begin{pmatrix}H&0\\0&I_{n}\end{pmatrix}-
\begin{pmatrix}A&B\\ C&D\end{pmatrix}^*
\begin{pmatrix}H&0\\0&I_{n}\end{pmatrix}
\begin{pmatrix}A&B\\ C&D\end{pmatrix}
\ge 0.
\end{equation}
\label{thmko}
\end{theorem}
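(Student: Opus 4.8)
The plan is to follow the proof of Theorem \ref{kypbastille} step by step, replacing the half-space ingredients by their disk analogues: the kernel \eqref{kh123} plays the role of $K_\phi$, and the complex generalized Schur lemma, Theorem \ref{gs1234}, replaces the complex positive real lemma. For the direct implication, assume the kernel \eqref{kh123} is positive definite in $\mathbb{B}$. Restricting to the real segment $(-1,1)$ and applying the ring homomorphism $\chi$ (which satisfies $\chi(M^*)=\chi(M)^*$ and sends positive matrices to positive matrices), one obtains that the complex kernel
\[
\frac{I_{2n}-\chi(S)(x)\chi(S)(y)^*}{1-xy}
\]
is positive definite for $x,y\in(-1,1)$. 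By Theorem \ref{thm146} the complex realization $\chi(S)(x)=\chi(D)+\chi(C)(xI_{2N}-\chi(A))^{-1}\chi(B)$ is minimal. Since $(-1,1)$ has accumulation points inside $\mathbb{D}$, Theorem \ref{thpanorama} applies \emph{directly} here — in contrast with Theorem \ref{kypbastille}, no Cayley transform of the variable is needed, since the domain is already the disk — and shows that $\chi(S)$ extends to a Schur function whose kernel is positive definite throughout $\mathbb{D}$. Theorem \ref{gs1234} then furnishes a negative definite Hermitian $H\in\mathbb{C}^{2N\times 2N}$ (negative, and not merely invertible, because $\chi(S)$ is genuinely Schur) satisfying
\[
\begin{pmatrix}H&0\\0&I_{2n}\end{pmatrix}-\begin{pmatrix}\chi(A)&\chi(B)\\ \chi(C)&\chi(D)\end{pmatrix}^*\begin{pmatrix}H&0\\0&I_{2n}\end{pmatrix}\begin{pmatrix}\chi(A)&\chi(B)\\ \chi(C)&\chi(D)\end{pmatrix}\ge 0.
\]

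It remains to select a solution in the range of $\chi$. Exactly as in Theorem \ref{kypbastille}, I would show that the involution $a$ of \eqref{a} maps solutions to solutions: taking complex conjugates of the inequality (which preserves positivity), conjugating by $\mathrm{diag}(E_N,E_n)$ and its inverse, and using \eqref{ouf} together with Lemma \ref{oufouf} — which fixes the block matrices $\chi(A),\dots,\chi(D)$ and the identity block $I_{2n}$ — one finds that $a(H)=E_N^{-1}\overline{H}E_N$ is again a solution. Since $a$ is real-linear and, by Remark \ref{REMp}, preserves negative definiteness, and since the solution set is convex, $H_1=\tfrac12(H+a(H))$ is a negative definite solution obeying the symmetry \eqref{opera4!4!4!4!}; hence $H_1=\chi(X)$ for a negative definite quaternionic Hermitian $X$. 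Because $\chi$ intertwines adjoints, diagonal augmentation and positivity, applying $\chi^{-1}$ to the complex inequality yields \eqref{opera4!4!6!7!8!!} with $H=X$.

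For the converse I would start from \eqref{opera4!4!6!7!8!!}, abbreviate its right-hand side as $\begin{pmatrix}Q&\mathcal{S}\\ \mathcal{S}^*&R\end{pmatrix}\ge 0$ with $Q=H-A^*HA-C^*C$, $\mathcal{S}=-(A^*HB+C^*D)$ and $R=I_n-B^*HB-D^*D$, and produce a factorization of $K_S$ parallel to \eqref{thilde1}. Working first on the real axis, a Stein-type rearrangement of the resolvents $(xI-A)^{-1}$ and $(yI-A^*)^{-1}$ — with the factor $1-xy$ replacing the $x+y$ of the half-space computation — should express the kernel $(I_n-S(x)S(y)^*)/(1-xy)$ as the sum of a manifestly positive kernel built from the resolvents and $-H^{-1}>0$, and a second kernel obtained by sandwiching $\begin{pmatrix}Q&\mathcal{S}\\ \mathcal{S}^*&R\end{pmatrix}$ between the resolvent vectors and dividing by $1-xy$. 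As in the passage from \eqref{thilde1} to \eqref{411primo}, each summand is then extended off the real axis through the $\star$-products, giving a slice hyperholomorphic kernel that is positive definite on all of $\mathbb{B}$; alternatively, the converse could be deduced from the already-proven Theorem \ref{kypbastille} by a Cayley transform on both the variable and the function, at the cost of its own noncommutative bookkeeping.

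The step I expect to be the main obstacle is this converse factorization. Because of noncommutativity, only the one-sided, real-axis form of such an identity extends via the $\star$-product — the same phenomenon already flagged for \eqref{facto1} versus \eqref{facto} — so the resolvents and the factor $-H^{-1}$ must be grouped so that the resulting two-variable kernel is genuinely of the extendable, $\star$-factorizable form. The bookkeeping is moreover more delicate than in the half-space case, since $I_n-S(x)S(y)^*$ produces the term $DD^*$ whereas the $(2,2)$ block of the inequality carries $D^*D$, so the Stein rearrangement must be organized to reconcile the two. A minor point, already settled in the direct part, is that the averaging $\tfrac12(H+a(H))$ has to preserve negative definiteness rather than mere Hermitian invertibility, which is precisely what guarantees that the recovered quaternionic $X$ is negative definite as the statement requires.
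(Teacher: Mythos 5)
Your proposal follows exactly the paper's own proof: the paper disposes of Theorem \ref{thmko} in two lines, saying to reduce to the complex setting via $\chi$, invoke \cite[Theorem 2]{DDGK} (i.e.\ Theorem \ref{gs1234}), and return to the quaternionic setting ``as in the proof of the previous theorem'' (Theorem \ref{kypbastille}); your write-up is precisely that argument carried out in detail, including the correct observations that no Cayley transform on the variable is needed before applying Theorem \ref{thpanorama}, and that the averaging $\tfrac12\bigl(H+a(H)\bigr)$ preserves negative definiteness so that the recovered quaternionic matrix is negative definite. The converse factorization you flag as the main obstacle is likewise left implicit by the paper (it is the disk analogue of \eqref{thilde1}--\eqref{411primo}), so your treatment is, if anything, more explicit than the original.
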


\begin{proof}
As in the previous, one reduces the claim to the complex setting using the map $\chi$, and
use \cite[Theorem 2]{DDGK}. We then go back to the quaternionic setting as in the
proof of the previous  theorem.
\end{proof}

We also remark that the set of Hermitian matrices $H$ satisfying \eqref{opera4!4!6!7!8!!} form a convex set.

\section{The case of negative squares: an extension theorem}
\setcounter{equation}{0}
We start this section by recalling a definition which, as explained in the introduction, makes sense since the spectral theorem holds for quaternionic Hermitian matrices (see also \cite{zbMATH06658818}).

\begin{definition}
\label{neg123123}
Let $K(u,v)$ be a $\mathbb H^{n\times n}$-valued function defined for $u,v$ in some set $\Omega$ and assume that $K(u,v)=K(v,u)^*$ for all $u,v\in\Omega$.
The function $K(u,v)$ is said to have $\kappa$ negative squares if the following condition holds: for every $N\in\mathbb N$, and every choice of points $u_1,\ldots, u_N\in\Omega$ and
vectors $\xi_1,\ldots, \xi_N\in\mathbb H^n$ the $N\times N$  Hermitian matrix with $(k,j)$ entry $\xi_k^*K(u_k,u_j)\xi_j$ has at most $\kappa$ strictly negative eigenvalues, and exactly $\kappa$
strictly negative eigenvalues for some choice of $N,u_1,\ldots, u_N,\xi_1,\ldots, \xi_N$.
\end{definition}
\begin{theorem}
Let $F$ be a $\mathbb C^{n\times n}$-valued rational function, analytic at infinity. Then the following are equivalent:\smallskip

$(1)$ ${\rm Re}\, F\,(iy)\ge 0,\quad y\in\mathbb R$.\smallskip

$(2)$ The kernel
\[
K_F(z,w)=\frac{F(z)+F(w)^*}{z+\overline{w}}
\]
has a finite number of negative squares in the right half-plane.
\label{thm123}
\end{theorem}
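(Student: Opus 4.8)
The plan is to prove the equivalence by showing that each of $(1)$ and $(2)$ is in turn equivalent to the solvability of the linear matrix inequality \eqref{conv1} by an \emph{invertible Hermitian} matrix $H\in\mathbb C^{N\times N}$ that need not be definite, exactly the generalized form of Theorem \ref{thmkyp1} recorded in the Remark following it. To begin, since $F$ is rational and analytic at infinity I fix a minimal realization $F(z)=D+C(zI_N-A)^{-1}B$. The equivalence of $(1)$ with the solvability of \eqref{conv1} by an invertible Hermitian $H$ is then precisely the generalized positive real lemma of \cite{DDGK}, so the whole matter reduces to relating \eqref{conv1} to the number of negative squares of $K_F$.

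For the implication from \eqref{conv1} to $(2)$, I would take such an $H$ and substitute it into the algebraic identity behind \eqref{thilde1}, which was obtained in the proof of Theorem \ref{kypbastille} under the sole assumption that $H$ be invertible; for complex arguments in the right half-plane it reads
\[
K_F(z,w)=-C(zI-A)^{-1}H^{-1}(\overline{w}I-A^*)^{-1}C^*+P(z,w),
\]
where $P$ is the Schur-type term built from $\begin{pmatrix}Q&S\\S^*&R\end{pmatrix}\ge 0$ over the Cauchy kernel $(z+\overline{w})^{-1}$ and is therefore positive definite. Since $P$ contributes no negative squares, subadditivity of the number of negative squares shows that $K_F$ has at most as many negative squares as the kernel $G(z)(-H^{-1})G(w)^*$, where $G(z)=C(zI-A)^{-1}$. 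The Hermitian matrix $-H^{-1}$ has a finite number of negative eigenvalues, equal to the number of \emph{positive} eigenvalues of $H$, and this finite number bounds the negative squares of $G(z)(-H^{-1})G(w)^*$. Hence $K_F$ has finitely many negative squares, proving $(2)$. By observability of $(C,A)$, which comes from minimality, the state vectors $(zI-A)^{-*}C^*\xi$ span all of $\mathbb C^N$, so one in fact gets equality, identifying the count with the number of positive eigenvalues of $H$ (zero in the positive definite case of Theorem \ref{thmkyp1}).

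For the converse, from $(2)$ I would recover a solution of \eqref{conv1}. Because $F$ is rational, the reproducing kernel space associated with $K_F$ is finite dimensional, and under $(2)$ it is a finite-dimensional reproducing kernel Pontryagin space whose negative index is the number of negative squares of $K_F$. The indefinite-metric realization theory for such spaces produces a minimal realization of $F$ together with an invertible Hermitian Gram matrix $H$ for which the Lyapunov--Stein relations assemble into \eqref{conv1}; invertibility of $H$ is guaranteed by non-degeneracy of the Pontryagin space together with minimality. Feeding this $H$ back into \cite{DDGK}, equivalently reading the converse computation of \cite[p.~26]{faurre} and \cite[p.~129]{MR525380} backwards from \eqref{conv1}, yields $\mathrm{Re}\,F(iy)\ge 0$ on the imaginary axis, which is $(1)$.

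The step I expect to be the main obstacle is exactly this converse $(2)\Rightarrow\eqref{conv1}$: passing from the purely metric hypothesis that $K_F$ has finitely many negative squares to the existence of a \emph{structured} invertible $H$ solving the matrix inequality. This requires the finite-dimensionality of the reproducing kernel space attached to a rational $F$ and the Pontryagin-space version of the de Branges--Rovnyak / Krein--Langer realization, and one must check carefully that the Gram matrix obtained is nonsingular, so that it genuinely falls under the generalized positive real lemma rather than degenerating. Once $H$ is available, both directions close through the single identity \eqref{thilde1} and the bookkeeping of eigenvalue signs against negative squares.
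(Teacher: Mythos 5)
Your forward direction is sound, and it is a genuinely different route from the paper's. You fix a minimal realization, invoke the generalized positive real lemma of \cite{DDGK} (exactly the Remark following Theorem \ref{thmkyp1}) to produce a nonsingular Hermitian $H$ solving \eqref{conv1}, and then read the identity \eqref{thilde1} -- which indeed requires only that $H$ be Hermitian and invertible -- to split $K_F$ into $G(z)(-H^{-1})G(w)^*$ plus a Schur-complement term over the Cauchy kernel, which is positive definite; Sylvester's law of inertia bounds the negative squares of the first summand by the number of positive eigenvalues of $H$, and subadditivity of negative squares gives $(2)$. (Your parenthetical claim of \emph{equality} with the positive inertia of $H$ does not follow from subadditivity alone, since adding a positive kernel can a priori lower the count; but equality is not needed.) The paper, by contrast, proves both directions without touching realizations or LMIs at all: Cayley transforms in the variable and in the value turn $F$ into a function $S_1$, multiplication by a finite scalar Blaschke factor reduces to a genuine Schur function, and the Krein--Langer characterization of generalized Schur functions \cite{kl1} delivers the kernel statement; the converse is the same argument read backwards.

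The converse is where your proposal breaks down, and at precisely the step you flagged as the main obstacle. The assertion that ``because $F$ is rational, the reproducing kernel space associated with $K_F$ is finite dimensional'' is false. Take $F\equiv I_n$: then $K_F(z,w)=2I_n/(z+\overline{w})$ is the reproducing kernel of (a rescaling of) the Hardy space of the right half-plane, which is infinite dimensional; in general the space with kernel $K_F$ is infinite dimensional whenever the Hermitian part of $F$ does not vanish identically on the imaginary axis. Under $(2)$, rationality buys you a finite \emph{negative index}, not finite dimension, so there is no finite Gram matrix $H$ to extract, and the proposed passage $(2)\Rightarrow\eqref{conv1}$ collapses; any repair via operator models in Pontryagin spaces would have to compress an infinite-dimensional self-adjoint model onto the $N$-dimensional state space, which is essentially the content of \cite{DDGK} itself and is not available before $(1)$ is known. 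The workable route for this direction is the function-theoretic one: if $K_F$ has finitely many negative squares, then after the Cayley transforms so does $K_{S_1}$, whence by \cite{kl1} one has $S_1=S_0b^{-1}$ with $S_0$ a Schur function and $b$ a finite Blaschke product; on the unit circle $|b|=1$ and $S_0$ is contractive, so $S_1$ is contractive there, and undoing the Cayley transforms gives ${\rm Re}\,F(iy)\ge 0$. Only with $(1)$ in hand can \cite{DDGK} be invoked for the LMI -- which is exactly the order of argument the paper uses in the proof of Theorem \ref{tm61}.
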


\begin{proof}
Assume $(1)$. By maybe adding to $F$ a term of the form $i\epsilon I_n$ we do not change the first condition but may assume that the Cayley transform $S=(I_n+F)^{-1}(I_n-F)$ is well defined. We make
another Cayley transform this time on the variable, to get to a function $S_1$ contractive on the unit circle. By multiplying $S_1$ by a finite scalar Blaschke factor $b$ we get a function $bS_1$ which
is analytic in $\mathbb D$ and contractive on the unit circle. It is therefore a Schur function $S_0$ and so $S_1=\frac{S_0}{b}$ is a generalized Schur function, and the corresponding kernel
\[
K_{S_1}(z,w)=\frac{I_n-S(z)S(w)^*}{1-z\overline{w}}
\]
has a finite number of negative squares, see \cite{kl1}.
Since $K_F$ and $K_{S_1}$ are related by Cayley transforms, we get that $K_F$ has a finite number of negative squares.\smallskip

The converse is proved by reading backwards the above arguments.
\end{proof}

We now remark that Theorem \ref{thpanorama} will not hold in general in the case of negative squares. As recalled in \cite[p. 82]{adrs}, the function defined by
\[
S(z)=\begin{cases} 1,\quad z\in\mathbb D\setminus\left\{0\right\}\\
                0,\quad z=0,\end{cases}
\]
is not meromorphic in the open unit disk, while the corresponding kernel $K_S$ has one negative square. A claim on a counterpart of the extension theorem was made in
\cite[Theorem 3.2]{zbMATH01821264}, but the proof turned out to be flawed;
see \cite{erratumzbMATH01821264}. We now prove a weaker
result, which is enough for our purposes, when one knows ahead of time that the function $K_S$ is meromorphic in the open unit disk, and has
a finite number of negative squares in a subset $\Omega$ which has an accumulation point.
We focus on the matrix-valued rational case for simplicity, and which is  what is needed in
the present paper. The result itself can be seen as an extension of \cite[p. 144]{donoghue} to the setting of negative squares (although beyond the
scope of the current work, note that the case of analytic functions can be similarly treated).

\begin{theorem}
\label{th1234}
Let $S$ be a $\mathbb C^{n\times n}$-valued rational function such that the kernel $K_S(z,w)$ has
a finite number of negative squares in $(-1,1)$, from which have been removed possibly a finite
number of points $P$. Then, the kernel $K_S(z,w)$ has the same number of negative squares in the open
unit disk, from which have been removed possibly a finite number of points $P_1$.
\end{theorem}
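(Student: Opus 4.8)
The plan is to express the number of negative squares as the negative index of a reproducing kernel Pontryagin space and then to propagate that index from the interval to the disk by analytic continuation, using in an essential way that $S$ is rational (so $K_S$ is meromorphic). This meromorphy is exactly the hypothesis that fails in the example $S(z)=1$ on $\mathbb D\setminus\{0\}$, $S(0)=0$ recalled just above, which is why the argument must be built around it. I would first set $\Omega=\mathbb D\setminus P_1$, where $P_1$ is the finite set of poles of $S$ in $\mathbb D$, so that $K_S(z,w)=(I_n-S(z)S(w)^*)\star(1-z\overline w)^{-\star}$ is defined on $\Omega\times\Omega$, holomorphic in $z$ and anti-holomorphic in $w$, with $\Omega$ connected; enlarging $P$ if necessary I may assume $\Omega_0:=(-1,1)\setminus P\subseteq\Omega$ has an accumulation point in $\Omega$. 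Since the number of negative squares of a kernel can only grow when the underlying set is enlarged, $K_S$ has \emph{at least} $\kappa$ negative squares on $\Omega$; the whole content is the reverse inequality.

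For the reverse inequality I would build over $\Omega_0$, where finiteness is given. Form the reproducing kernel Pontryagin space $\mathcal P_0$ generated by the functions $z\mapsto K_S(z,w)\xi$ for $w\in\Omega_0$, $\xi\in\mathbb C^n$, with inner product determined by $\langle K_S(\cdot,w)\xi,K_S(\cdot,w')\xi'\rangle=(\xi')^*K_S(w',w)\xi$; by Definition~\ref{neg123123} this space has negative index exactly $\kappa$. Choosing a fundamental decomposition $\mathcal P_0=\mathcal P_+[\oplus]\mathcal P_-$ with $\dim\mathcal P_-=\kappa$, the reproducing kernel splits as
\[
K_S(z,w)=K_+(z,w)-\sum_{i=1}^\kappa g_i(z)g_i(w)^*\qquad\text{on }\Omega_0\times\Omega_0,
\]
where $K_+\ge 0$ is the kernel of the Hilbert part and $g_1,\dots,g_\kappa$ is an orthonormal basis of the anti-Hilbert part $\mathcal P_-$. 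The point of isolating this finite-dimensional negative part is that it is governed by the poles of $S$ inside $\mathbb D$: the $g_i$ are limits of combinations of the rational functions $K_S(\cdot,w)$ and, because $S$ is rational, one shows the $g_i$ are themselves rational with poles only among those of $S$, hence extend holomorphically to all of $\Omega$.

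It then remains to extend the positive part $K_+$. Here I would mimic the proof of Theorem~\ref{thm123}: after Cayley transforms on both the variable and on $S$ (passing to the disk/Schur picture and, via a finite Blaschke factor as in \cite{kl1}, to a genuine Schur function whose de Branges--Rovnyak kernel is $K_+$), Theorem~\ref{thpanorama} applies and produces a positive extension of $K_+$ to all of $\Omega$, since $\Omega_0$ has an accumulation point. The displayed identity then persists on $\Omega$ by the identity theorem, both sides being meromorphic and agreeing on $\Omega_0$, and it exhibits $K_S$ on $\Omega$ as a positive kernel corrected by a negative term of rank $\kappa$. Consequently $K_S$ has at most $\kappa$ negative squares on $\Omega$, and combined with the first paragraph it has exactly $\kappa$; taking $P_1$ to contain the poles of $S$ and of the $g_i$ yields the statement.

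The main obstacle is precisely the extendability claims in the last two paragraphs — that the finite negative part $\{g_i\}$ is rational and that $K_+$ is a Schur-type kernel to which Theorem~\ref{thpanorama} applies — since this is where rationality/meromorphy of $S$ is used and where the non-meromorphic counterexample breaks down. A secondary but genuine technical point, which I expect to absorb most of the work, is to show that convergence in the indefinite norm of $\mathcal P_0$ forces local uniform convergence of the holomorphic representatives on compact subsets of $\Omega$, so that the extended kernel and the functions $g_i$ are honestly holomorphic on $\Omega$ rather than merely defined on $\Omega_0$.
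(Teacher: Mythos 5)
Your overall strategy (trivial lower bound by restriction, then an upper bound by splitting the interval Pontryagin space into a positive part plus a rank-$\kappa$ negative part and extending each piece to the disk) is legitimate and genuinely different from the paper's, but both extendability claims on which it rests are left as assertions, and as stated they do not hold up. First, the claim that the $g_1,\dots,g_\kappa$ are rational ``because $S$ is rational'' is not an argument: the $g_i$ are elements of the \emph{closure} of the span of kernel functions, and limits of rational functions of unbounded degree need not be rational (already for $S\equiv 0$ the space $\mathcal P_0$ is the restriction of the Hardy space to the interval, most of whose elements are not rational). In fact the statement that every element of the interval space extends holomorphically to $\mathbb D\setminus P_1$ is essentially the content of Theorem \ref{th1234} itself (it is the conclusion of the paper's Step 10), so assuming it for the $g_i$ begs the question. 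The gap is repairable, but by a specific device you do not invoke: in a Pontryagin space of finite negative index, every \emph{dense subspace} contains a maximal negative subspace; hence $\mathcal P_-$ may be chosen inside the finite linear span of the kernel functions $K_S(\cdot,y)c$, $y\in(-1,1)\setminus P$, and these particular functions \emph{are} rational in $z$ with poles (inside $\mathbb D$) only among the poles of $S$, since $1/y\notin\mathbb D$. This choice also dissolves your ``secondary'' worry about indefinite-norm limits. This is precisely the trick the paper uses in its Step 10, in the reverse direction.

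Second, your extension of $K_+$ is circular as proposed. The Krein--Langer factorization of \cite{kl1} applies to generalized Schur functions, i.e., it presupposes that the kernel has finitely many negative squares \emph{in the disk} --- exactly what Theorem \ref{th1234} is meant to establish; and the proof of Theorem \ref{thm123} that you propose to mimic rests on positivity on the boundary (unit circle or imaginary axis) together with rationality, data you do not have here, where the hypothesis lives on $(-1,1)$. Moreover $K_+=K_S+\sum_i g_ig_i^*$ is not of the Schur form $(I_n-\Sigma(z)\Sigma(w)^*)/(1-z\overline w)$, so Theorem \ref{thpanorama} does not apply to it directly. Once the $g_i$ are chosen rational as above, the correct tool is FitzGerald's extension theorem (\cite[Theorem p.~144]{donoghue}, \cite{zbMATH03282859}, quoted in the introduction): $K_+$ then extends sesquianalytically to $\mathbb D\setminus P_1$, and positivity on the interval plus sesquianalyticity yields positivity on the disk; the identity $K_S=K_+-\sum_i g_ig_i^*$ then persists by uniqueness of analytic continuation and bounds the number of negative squares by $\kappa$. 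For comparison, the paper avoids all extension problems: it uses the decomposition $K_S=\frac{I_n}{1-z\overline w}-\frac{S(z)S(w)^*}{1-z\overline w}$, whose two summands are already positive definite on all of $\mathbb D\setminus P_1$, to build a reproducing kernel Krein space with kernel $K_S$ on the disk (via the sum kernel $M_S$ and a self-adjoint contraction), and only then restricts to the interval to show that this Krein space is Pontryagin of index $\kappa$.
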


Before proving the theorem we recall a result on positive definite kernels.

\begin{theorem}
Let $\Lambda_1(z,w)$ and $\Lambda_2(z,w)$ be two $\mathbb C^{n\times n}$-valued kernels, positive definite on the set $\Omega$, and let $\mathcal H(\Lambda_1),\mathcal H(\Lambda_2)$ and
$\mathcal H(\Lambda_1+\Lambda_2)$ denote the reproducing kernel Hilbert spaces associated to $\Lambda_1$, $\Lambda_2$ and $\Lambda_1+\Lambda_2$ respectively. Then
\begin{equation}
\mathcal H(\Lambda_1+\Lambda_2)=\mathcal H(\Lambda_1)+\mathcal H(\Lambda_2).
\end{equation}
More precisely, every element in $\mathcal H(\Lambda_1+\Lambda_2)$ can be written as a (in general non-unique) sum $f=f_1+f_2$, where $f_j\in\mathcal H(\Lambda_j)$, $j=1,2$. For every such decomposition
\begin{equation}
\label{ineqineq}
\|f\|^2_{\mathcal H(\Lambda_1+\Lambda_2)}\le \|f_1\|^2_{\mathcal H(\Lambda_1)}+\|f_2\|^2_{\mathcal H(\Lambda_2)},
\end{equation}
and there is a unique decomposition for which equality holds.
For the function
\begin{equation}
\label{bastille!!!!}
f(z)=\sum_{u=1}^U (\Lambda_1(z,w_u)+\Lambda_2(z,w_u))c_u,
\end{equation}
the minimal (unique) decomposition for which equality holds in \eqref{ineqineq} is given by
\begin{equation}
\label{f1f2!!!}
f_1(z)=\sum_{u=1}^U \Lambda_1(z,w_u)c_u,\quad and\quad f_2(z)=\sum_{u=1}^U \Lambda_2(z,w_u)c_u.
\end{equation}
\label{rosalind}
\end{theorem}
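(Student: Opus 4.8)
The plan is to realize $\mathcal H(\Lambda_1+\Lambda_2)$ as the range of the addition map out of an orthogonal direct sum, following Aronszajn's sum-of-kernels construction. First I would form the Hilbert space $\mathcal K=\mathcal H(\Lambda_1)\oplus\mathcal H(\Lambda_2)$ with $\|(f_1,f_2)\|_{\mathcal K}^2=\|f_1\|^2_{\mathcal H(\Lambda_1)}+\|f_2\|^2_{\mathcal H(\Lambda_2)}$, and introduce the bounded linear map $T$ sending $(f_1,f_2)$ to the $\mathbb C^n$-valued function $f_1+f_2$ on $\Omega$. By definition its range is $\mathcal H(\Lambda_1)+\mathcal H(\Lambda_2)$, and I would equip this range with the quotient (infimum) norm $\|f\|_{\mathrm{ran}}^2=\inf\{\|(f_1,f_2)\|_{\mathcal K}^2 : f_1+f_2=f\}$. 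Writing $\mathcal N=\ker T$, the restriction $T|_{\mathcal N^\perp}$ is then an isometric isomorphism onto the range, which transports the Hilbert structure of $\mathcal N^\perp$ to the range and makes point evaluations there bounded.

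The crux is to identify this range, with the infimum norm, as the reproducing kernel Hilbert space of $\Lambda_1+\Lambda_2$. I would first note that $(f_1,f_2)\in\mathcal N$ iff $f_2=-f_1$ with $f_1\in\mathcal H(\Lambda_1)\cap\mathcal H(\Lambda_2)$, so $\mathcal N=\{(g,-g):g\in\mathcal H(\Lambda_1)\cap\mathcal H(\Lambda_2)\}$. For $w\in\Omega$ and $c\in\mathbb C^n$ I would test the distinguished element $e_{w,c}=(\Lambda_1(\cdot,w)c,\Lambda_2(\cdot,w)c)\in\mathcal K$: using the reproducing property in each summand (with the convention $\langle f,\Lambda_j(\cdot,w)c\rangle=c^*f(w)$) one gets $\langle e_{w,c},(g,-g)\rangle_{\mathcal K}=c^*g(w)-c^*g(w)=0$, so $e_{w,c}\in\mathcal N^\perp$, while $Te_{w,c}=(\Lambda_1+\Lambda_2)(\cdot,w)c$. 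Since $T|_{\mathcal N^\perp}$ is isometric, for any $h=T(u_1,u_2)$ with $(u_1,u_2)\in\mathcal N^\perp$ one computes $\langle h,(\Lambda_1+\Lambda_2)(\cdot,w)c\rangle_{\mathrm{ran}}=\langle (u_1,u_2),e_{w,c}\rangle_{\mathcal K}=c^*u_1(w)+c^*u_2(w)=c^*h(w)$, which is exactly the reproducing identity for the kernel $\Lambda_1+\Lambda_2$. Hence the range with the infimum norm is $\mathcal H(\Lambda_1+\Lambda_2)$, establishing the set equality asserted first.

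The norm statements then follow formally. The identity $\|f\|^2_{\mathcal H(\Lambda_1+\Lambda_2)}=\inf\{\|f_1\|^2_{\mathcal H(\Lambda_1)}+\|f_2\|^2_{\mathcal H(\Lambda_2)}:f=f_1+f_2\}$ gives the inequality \eqref{ineqineq} for every decomposition, and because the infimum equals the squared norm of the orthogonal projection of any preimage onto $\mathcal N^\perp$, it is attained at exactly one point of $\mathcal K$, namely the unique preimage lying in $\mathcal N^\perp$; this yields the unique equality decomposition. Finally, for $f(z)=\sum_{u=1}^U(\Lambda_1(z,w_u)+\Lambda_2(z,w_u))c_u$ I would observe that $\sum_{u=1}^U e_{w_u,c_u}\in\mathcal N^\perp$ as a finite sum of elements of $\mathcal N^\perp$, and that it maps under $T$ to $f$; therefore the decomposition \eqref{f1f2!!!} is precisely the minimal one.

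The step I expect to demand the most care is the verification that the range, endowed with the infimum norm, is genuinely a complete Hilbert space of functions on $\Omega$ in which norm convergence forces pointwise convergence. This is exactly what the isometry $T|_{\mathcal N^\perp}$ provides, together with boundedness of point evaluations on $\mathcal K$, but it should be stated explicitly rather than assumed. The remaining matrix-valued bookkeeping in the reproducing identities is routine once the convention on $c^*$ versus $c$ is fixed.
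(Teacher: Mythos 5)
Your proposal is correct, and in fact the paper does not prove Theorem \ref{rosalind} at all: it simply cites Aronszajn's 1950 paper \cite{aron} for the proof. Your argument --- forming $\mathcal H(\Lambda_1)\oplus\mathcal H(\Lambda_2)$, taking the addition map $T$ with kernel $\mathcal N=\{(g,-g):g\in\mathcal H(\Lambda_1)\cap\mathcal H(\Lambda_2)\}$, transporting the Hilbert structure of $\mathcal N^\perp$ to the range via the quotient norm, and checking that the elements $e_{w,c}=(\Lambda_1(\cdot,w)c,\Lambda_2(\cdot,w)c)$ lie in $\mathcal N^\perp$ and reproduce with kernel $\Lambda_1+\Lambda_2$ --- is precisely the classical construction in that cited source, including the identification of the minimal decomposition \eqref{f1f2!!!} as the image of $\sum_{u}e_{w_u,c_u}\in\mathcal N^\perp$; the one point you rightly flag (closedness of $\mathcal N$, i.e.\ that the range with the infimum norm is a genuine Hilbert space of functions with bounded point evaluations) is indeed the only detail needing explicit mention, and it follows from the boundedness of point evaluations in each $\mathcal H(\Lambda_j)$ exactly as you indicate.
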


The proof of this result can be found in \cite{aron}. The fact that the equality in \eqref{ineqineq} holds for \eqref{f1f2!!!} when $f$ is of the form \eqref{bastille!!!!}
is crucial in the proof.

\begin{proof}[Proof of Theorem \ref{th1234}]
The strategy of the proof is as follows.
\[
K_S(z,w)=\frac{I_n}{1-z\overline{w}}-\frac{S(z)S(w)^*}{1-z\overline{w}}
\]
is a difference of two positive definite functions, and hence is the reproducing kernel of a
(in general not uniquely defined) reproducing kernel Krein space; see \cite{schwartz}. We study the restriction of the elements of
this space to $(-1,1)\setminus P$ to see that it is a Pontryagin space.\\

We proceed in a number of steps.\\

STEP 1: {\sl The function
\[
M_S(z,w)=\frac{I_n}{1-z\overline{w}}+\frac{S(z)S(w)^*}{1-z\overline{w}}
\]
is positive definite in $\mathbb D\setminus P_1$. It is the reproducing kernel of a (uniquely defined)
Hilbert space of $\mathbb C^n$-valued functions analytic in $\mathbb D\setminus P_1$.}\smallskip

The first part of the claim is just Aronszajn-Moore result; the second part follows from the joint analyticity
of $M_S(z,w)$ in $z$ and $\overline{w}$.
Such kernels are called Bergman kernels in \cite{donoghue}.  See e.g. \cite[Theorem p. 92]{donoghue} or
\cite[Th\'eor\`eme 2.3.5, p.31]{MR1638044} for the latter claim on analyticity. We will denote the reproducing
kernel Hilbert space with reproducing kernel $M_S(z.w)$ by $\mathcal H(M_S)$.\\

STEP 2: {\sl The linear relation $R$ of $\mathcal H(M_S)\times\mathcal H(M_S)$ spanned by the functions
\begin{equation}
(M_S(\cdot, w)c\, ,\, K_S(\cdot, w)c)
\label{archiv123}
\end{equation}
extends to the graph of a contractive self-adjoint map from $\mathcal H(M_S)$ into itself.}\\

The proof follows, with slight modifications, the arguments in \cite[Proof of Lemma 4, p. 177]{a-nlsa}.
It is convenient to set
\begin{equation}
\Lambda_1(z,w)=\frac{I_n}{1-z\overline{w}}\quad{\rm and}\quad \Lambda_2(z,w)=\frac{S(z)S(w)^*}{1-z\overline{w}}.
\end{equation}
Let first $f$ be a finite linear combination of kernels:
\[
f(z)=\sum_{u=1}^U M_S(z,w_u)c_u=f_1(z)+f_2(z),
\]
with
\begin{align}
f_1(z)&=\sum_{u=1}^U\frac{c_u}{1-z\overline{w_u}}\in\mathcal H(\Lambda_1)\\
f_2(z)&=\sum_{u=1}^U\frac{S(z)S(w_u)^*c_u}{1-z\overline{w_u}}\in\mathcal H(\Lambda_2),
\end{align}
and assume $f(z)\equiv 0$. We have $(0,g)\in R$ with $g(z)=\sum_{u=1}^U K_S(z,w_u)c_u$ and we wish to show that $g(z)\equiv 0$.
We have
\[
\sum_{u=1}^U c_v^*M_S(w_v,w_u)c_u=0,\quad v=1,\ldots, U
\]
and so
\[
\sum_{u,v=1}^U c_v^*M_S(w_v,w_u)c_u=0,
\]
which implies in particular that
\[
\sum_{u,v=1}^U\frac{c_v^*c_u}{1-w_v\overline{w_v}}=0\quad \sum_{u,v=1}^U\frac{c_v^*S(w_v)S(w_u)^*c_u}{1-w_v\overline{w_v}}=0\quad
\]
and so, by positivity of the functions $\frac{I_n}{1-z\overline{w}}$ and $\frac{S(z)S(w)^*}{1-z\overline{w}}$ we get
\[
f_1(z)
\equiv 0\quad{\rm and}\quad f_2(z)\equiv 0,
\]
that is, $g(z)\equiv 0$, and $R$ is the graph of a densely defined operator $T$.
We note that
\begin{equation}
\label{funda}
\|f\|_{\mathcal H(M_S)}^2=\min_{\substack{f=h_1+h_2\\ h_j\in\mathcal H(\Lambda_j)\, j=1,2}}\|h_1\|^2+\|h_2\|^2=\|f_1\|^2_{\mathcal H(\Lambda_1)}+\|f_2\|^2_{\mathcal H(\Lambda_2)}.
\end{equation}

STEP 3: {\sl We show now that the operator $T$ is a self-adjoint contraction.}\\

We have with $f,f_1,f_2$ as above,
\[
f=f_1+f_2\quad{\rm and}\quad T(f_1+f_2)=f_1-f_2\stackrel{\rm def.}{=}g.
\]
Note that Theorem \ref{rosalind} insures that $Tf\in\mathcal H(M_S)$.
Moreover
\[
\|g\|^2_{\mathcal H(M_S)}=\min \left(\|g_1\|_{\mathcal H(\Lambda_1)}^2+\|g_2\|_{\mathcal H(\Lambda_2)}^2\right),
\]
where the infimum, which in fact is a minimum, is computed over all decomposition $g=g_1-g_2$ with $g_1\in\mathcal H(\Lambda_1)$ and
$g_2\in\mathcal H(\Lambda_2)$. Thus, by \eqref{funda}
\begin{equation}
\|Tf\|^2=\|g\|^2\le\|f_1\|_{\mathcal H(\Lambda_1)}^2+\|f_2\|_{\mathcal H(\Lambda_2)}^2=\|f\|^2.
\end{equation}
We now show that $T$ is self-adjoint. We prove it on the kernels and will show that
\begin{equation}
\label{taxi_driver}
\begin{split}
\langle T((\Lambda_1(\cdot, w)+\Lambda_2(\cdot, w))c)\, ,\,(\Lambda_1(\cdot, v)+\Lambda_2(\cdot, v))d)\rangle_{\mathcal H(M_S)}=\\
&\hspace{-5cm}=\langle (\Lambda_1(\cdot, w)+\Lambda_2(\cdot, w))c)\, ,\,T((\Lambda_1(\cdot, v)+\Lambda_2(\cdot, v))d)\rangle_{\mathcal H(M_S)},
\end{split}
\end{equation}
or equivalently
\begin{equation}
\label{nid}
\begin{split}
\langle (\Lambda_1(\cdot, w)-\Lambda_2(\cdot, w))c)\, ,\,(\Lambda_1(\cdot, v)+\Lambda_2(\cdot, v))d)\rangle_{\mathcal H(M_S)}=\\
&\hspace{-5cm}=\langle (\Lambda_1(\cdot, w)+\Lambda_2(\cdot, w))c\, ,\,(\Lambda_1(\cdot, v)-\Lambda_2(\cdot, v))d\rangle_{\mathcal H(M_S)}.
\end{split}
\end{equation}
The reproducing kernel property shows that both sides of \eqref{nid} coincide and are equal to
\[
d^*(\Lambda_1(v,w)-\Lambda_2(v,w))c.
\]
Being contractive and densely defined, $T$ extends to a unique everywhere defined self-adjoint contraction from $\mathcal H(M_S)$ into itself, which we still denote by $T$.\smallskip

We set $T=\sigma P$, where $P$ is the unique positive squareroot of $T^2$ and $\sigma$ is the sign of $T$.
Recall that
\[
P=\int_{\mathbb R}|t|dE_t\quad{\rm and}\quad \sigma=\int_{\mathbb R\setminus \left\{0\right\}} \frac{t}{|t|}dE_t
\]
where $T=\int_{\mathbb R}tdE_t$ is the spectral decomposition of  $T$ (and where the integral is in fact on a bounded integral since $T$ is bounded),
and that $\sigma$ and $P$ commute.\\

STEP 4: {\sl The space
\begin{equation}
\mathcal K=\left\{F=\sqrt{P}u,\, u\in\mathcal H(M_S)\,;\, \|F\|=\|(I-\pi)u\|_{\mathcal H(M_S)}\right\}
\end{equation}
where $\pi$ is the orthogonal projection onto $\ker P$,  is a Hilbert space.}\\

STEP 5: {\sl Define
\begin{equation}
\label{qazxcv1}
[Tu,Tv]_{\mathcal K}=\langle Tu,v\rangle_{\mathcal H(M_S)}
\end{equation}
and
\begin{equation}
\label{qazxcv}
\langle Tu,Tv\rangle_{\mathcal K}=\langle P u,v\rangle_{\mathcal H(M_S)}.
\end{equation}
Then, $(\mathcal K,\langle\cdot,\cdot\rangle_{\mathcal K})$ is a Hilbert space, and there exists a bounded and boundedly invertible self-adjoint operator $G$ such that
\begin{equation}
[Tu,Tv]_{\mathcal K}=\langle Tu,GTv\rangle_{\mathcal K}.
\end{equation}
}\\

We have
\[
\begin{split}
[\sigma Tu,\sigma Tv]_{\mathcal K}&=[Tu,Tv]_{\mathcal K}\\
[\sigma Tu, Tv]_{\mathcal K}&=[Tu,\sigma Tv]_{\mathcal K}\\
\end{split}
\]
so that $\sigma$ satisfies the equations $\sigma^2=I$ and $\sigma=\sigma^*$ in $\mathcal K$. We take $G=\sigma$.\\

STEP 6: {\sl  By its definition, the space $\mathcal K$ with the indefinite inner product \eqref{qazxcv1} is a reproducing kernel Krein space the inner product with reproducing kernel $K_S$.}\\

By \eqref{archiv123} we have that $z\mapsto K_S(z,w)c$ belongs to $\mathcal K$. Furthermore, we have:
\begin{equation}
\begin{split}
[Tu, K_S(\cdot, w)c]_{\mathcal K}&=[Tu, T(M_S(\cdot, w)c)]_{\mathcal K}\\
&=\langle Tu, (M_S(\cdot, w)c)\rangle_{\mathcal H(M_S)}\\
&=c^*(Tu)(w).
\end{split}
\end{equation}

STEP 7: {\sl The linear span $\mathcal M$ of the functions $K_S(\cdot, x)c$ with $x\in (-1,1)\setminus P$ is dense in $\mathcal K$.}\\

Indeed, by the reproducing kernel property, any element $f$ orthogonal to the linear span vanishes on $(-1,1)\setminus  P$. In view of
the analyticity of the element of $\mathcal K$, and since $\mathbb D\setminus P_1$ is connected (since $P_1$ is finite), we get $f\equiv 0$.\\

At this stage of the proof we define $\mathcal P(K_S)$ to be the (uniquely defined)
reproducing kernel Pontryagin space with reproducing kernel $K_S(z,w)$ with $z,w$ restricted to
$(-1,1)\setminus P$. \\

STEP 8: {\sl The restriction map $f\mapsto f_{|(-1,1)\setminus P}$ is one to one from $\mathcal M$ into $\mathcal P(K_S)$.}\\
The proof of this step follows directly from the analyticity of the elements of $\mathcal M$.
\\

STEP 9: {\sl  The inner products of the elements of $\mathcal M$ (and restricted to $(-1,1)\setminus P$) coincide in $\mathcal K$ and $\mathcal P(K_S)$
respectively.}\\

Step 9 follows from the reproducing kernel property.\\

STEP 10: {\sl $\mathcal K$ is in fact a Pontryagin space, made
of the analytic extensions of the functions of $\mathcal P(K_S)$ to $\mathbb D\setminus P_1$.}\\

The linear span $\mathcal M^\prime$ of the functions $K_S(\cdot, y)c$ with $y\in(-1,1)\setminus P$ and $c\in\mathbb C^n$
(i.e. of the restrictions of the elements of $\mathcal M$ to $(-1,1)\setminus P$) is dense in the Pontryagin space $\mathcal P(K_S)$ because of the
reproducing kernel property. It contains therefore a maximal strictly negative subspace $\mathcal P_-$. Write
\[
\mathcal P(K_S)=\mathcal P_+[+]\mathcal P_-
\]
and for $f\in\mathcal M^\prime$ write
\[
f=f_++f_-.
\]
Both $f_+$ and $f_-$ are by definition finite linear combinations of elements of the form $K(\cdot, y)c$.
Denote by $I$ the restriction map appearing in Step 8.
The spaces
\begin{equation}
\label{funda123}
\left\{I^{-1}(f_+); f=f_++f_-; f\in\mathcal M^\prime\right\}\quad {\rm and}\quad \left\{I^{-1}(f_-); f=f_++f_-; f\in\mathcal M^\prime\right\}
\end{equation}
are orthogonal in $\mathcal K$, and respectively positive and negative in $\mathcal K$ thanks to Step 9. Since the second one is finite dimensional,
\eqref{funda123} is a fundamental decomposition of $\mathcal K$ with a finite dimensional negative part, and $\mathcal K$ is a Pontryagin space.
\end{proof}

\begin{remark}
{\rm One can also build a reproducing kernel Krein space with reproducing kernel $K_S$
of functions analytic in $|z|<r<1$ by multiplying $K_S(z,w)$ by a
scalar function $f(z)\overline{f(w)}$ and apply \cite[Theorem 3.1 p. 1198]{a2}.}
\end{remark}

Let $S$ be a rational slice function, hyperholomorphic at the origin, and with realization $S(p)=D+pC\star(I-pA)^{-\star}B$. The map $x\mapsto \chi(S)(x)$ is the restriction of the rational function
of a complex variable
\[
\chi(S)(z)=D+z\chi(C)(I-z\chi(A))^{-1}\chi(B)
\]
to the interval $(-1,1)\setminus P$ (at the possible exception of a finite set $P_1$ in $(-1,1)$ where $\det(I-xA)$ vanishes). By the above theorem,
 the kernel $K_S$ has $\kappa$ negative squares in $(-1,1)$ if and only if it has $\kappa$
negative squares in $\mathbb D\setminus P_1$, where $P_1$ are the zeros of $\det (zI-A)$ in $\mathbb D$.

\section{Generalized quaternionic positive real lemma}
\setcounter{equation}{0}
\bibliographystyle{plain}
In this section we prove a generalized version of Theorem \ref{kypbastille} in case the kernel has negative squares.
\begin{theorem}
\label{tm61}
Let $\phi$ be a $\mathbb H^{n\times n}$-valued slice hyperholomorphic rational function,
slice hyperholomorphic at infinity, with minimal realization \eqref{realinfi}
Then, the kernel $K_\Phi$ (defined by \eqref{kphi}) has a finite number of negative squares in the open right half-space $\mathbb H_+$, from which is possibly removed a finite set of points, if and only if
there exists an Hermitian non-singular matrix $H\in\mathbb H^{N\times N}$ such that
\eqref{opera4!4!6!7!} holds.
\end{theorem}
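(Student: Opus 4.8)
The plan is to mirror the structure of the proof of Theorem \ref{kypbastille}, using the same $\chi$-machinery, but now replacing the positive-definite extension result (Theorem \ref{thpanorama}) by its negative-squares counterpart (Theorem \ref{th1234}). First I would restrict the kernel $K_\phi$ to the positive real line and apply the ring homomorphism $\chi$; this transforms the quaternionic kernel into the complex-matrix-valued kernel
\[
K_{\chi(\phi)}(x,y)=\frac{\chi(\phi)(x)+\chi(\phi)(y)^*}{x+y},\qquad x,y>0,
\]
and the hypothesis that $K_\phi$ has $\kappa$ negative squares on the real ray (minus a finite set) translates into $K_{\chi(\phi)}$ having a finite number of negative squares there. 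By Theorem \ref{thm146} the complex realization $\chi(\phi)(x)=\chi(D)+\chi(C)(xI_{2N}-\chi(A))^{-1}\chi(B)$ is minimal.

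Next I would invoke Theorem \ref{th1234} (after a Cayley transform on both the variable and on $\chi(\phi)$, exactly as in the positive-definite case) to extend the finite-negative-squares property of $K_{\chi(\phi)}$ from the positive real axis to the full open right half-plane minus a finite set of points. At this point Theorem \ref{thm123} applies and yields that ${\rm Re}\,\chi(\phi)(iy)\ge 0$ on the imaginary axis; invoking the complex generalized positive real lemma of \cite{DDGK} (the version of Theorem \ref{thmkyp1} extended to functions merely positive on the imaginary axis, where $H$ is only required Hermitian non-singular) produces an invertible Hermitian $H\in\mathbb C^{2N\times 2N}$ satisfying the complex inequality \eqref{opera4!4!}.

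The final step is the descent back to the quaternionic setting, which is the only place requiring care. As in the proof of Theorem \ref{kypbastille}, if $H$ solves \eqref{opera4!4!} then so does $a(H)=E_N^{-1}\overline{H}E_N$, so the symmetrized matrix $H_1=\tfrac{H+a(H)}{2}$ still solves \eqref{opera4!4!} and now satisfies the intertwining relation \eqref{opera4!4!4!4!}; by Lemma \ref{oufouf} it lies in the range of $\chi$, so $H_1=\chi(X)$ for a Hermitian quaternionic $X$, and Lemma \ref{oufouf} delivers \eqref{opera4!4!6!7!}. The one subtlety is that in the negative-squares case $H_1$ is only required Hermitian and non-singular rather than negative definite, so I must verify that symmetrizing preserves non-singularity and the correct inertia; since $a$ is a real-linear involution that preserves the relevant signature under the self-adjoint form $E_N^{-1}=E_N^*$ (Remark \ref{REMp}), the convexity of the solution set guarantees $H_1$ remains an admissible non-singular Hermitian solution. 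The converse direction is obtained, as usual, by reading the argument backwards: starting from a non-singular $H$ satisfying \eqref{opera4!4!6!7!} one reconstructs via the factorization \eqref{thilde1}--\eqref{411primo} a kernel representation whose number of negative squares is governed by the inertia of $H$, and the finite-negative-squares conclusion follows.

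The main obstacle I expect is the inertia bookkeeping in the descent step: ensuring that the complex Hermitian non-singular matrix produced by \cite{DDGK} can be chosen (after symmetrization by $a$) to lie in the range of $\chi$ while retaining both non-singularity and the number of negative eigenvalues that matches $\kappa$. Establishing that $a$ preserves signatures and that symmetrization does not destroy invertibility is the delicate point; everything else is a routine transcription of the positive-definite proof with Theorem \ref{th1234} substituted for Theorem \ref{thpanorama}.
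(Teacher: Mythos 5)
Your proposal follows essentially the same route as the paper's proof: restrict to the positive real axis, apply $\chi$, extend the finite-negative-squares property via Theorem \ref{th1234} (through Cayley transforms) and use Theorem \ref{thm123} (equivalently, the Krein--Langer representation theorems the paper cites directly) to conclude that $\chi(\phi)$ is generalized positive real, invoke \cite{DDGK} to get a non-singular Hermitian solution of \eqref{opera4!4!}, symmetrize with the involution $a$ and descend through $\chi$; the converse via \eqref{thilde1}--\eqref{411primo} is also exactly the paper's argument.

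One caveat on the step you yourself flag as delicate: your justification that $H_1=\frac{H+a(H)}{2}$ stays non-singular is not valid as stated. Convexity of the solution set of \eqref{opera4!4!} only gives that $H_1$ is again a Hermitian solution; the average of two invertible Hermitian matrices of the same inertia can perfectly well be singular (e.g. $\mathrm{diag}(1,-1)$ and $\mathrm{diag}(-1,1)$). What saves the argument is minimality: if $H_1\xi=0$, then the quadratic form of the positive semidefinite matrix in \eqref{opera4!4!} vanishes at the vector $\begin{pmatrix}\xi\\ 0\end{pmatrix}$, hence that vector lies in its kernel, which yields $H_1\chi(A)\xi=0$ and $\chi(C)\xi=0$; thus $\ker H_1$ is $\chi(A)$-invariant and contained in $\ker\chi(C)$, so it is trivial by observability of the pair $(\chi(C),\chi(A))$ (Theorem \ref{thm146}). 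Note also that no further ``inertia bookkeeping'' is needed: the statement only asks for a Hermitian non-singular $H$, not one whose negative signature matches the number of negative squares. The paper's own proof glosses over the same point with ``the rest goes as in the proof of Theorem \ref{kypbastille}'' (where negative definiteness makes the issue invisible), so once this small patch is made your proposal is no less complete than the published argument.
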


\begin{proof}
Assume first that $K_\Phi$ has a finite number of negative squares in
$\mathbb H_+\setminus Q$, where $Q$ is a finite set of points (or is the empty set).
Restricting $p$ and $q$ to $\mathbb R_+\setminus Q$ and applying the map $\chi$ we see
that the kernel \eqref{chiphi} has a finite number of negative squares on $\mathbb R_+\setminus Q$. This kernel is the restriction to $(\mathbb R_+\setminus Q)^2$ of the kernel
\begin{equation}
\frac{(\chi(\phi))(z)+((\chi(\phi))(w))^*}{z+\overline{w}}.
\end{equation}
It follows from the representation theorem of Krein Langer for generalized Schur function, and using a Cayley transform, or by the
representation theorem of generalized positive real functions due to
\cite{MR1771251}, \cite{MR1736921} (both in the scalar case; see also
\cite{AlpLew2011}) we see that ${\rm Re}\,\chi(\phi)(x)\ge 0$ on the imaginary line. We can now resort to the
result of \cite{DDGK} to assert that there is an Hermitian matrix $H$ satisfying  \eqref{opera4!4!}. The rest of this side of the proof goes as in the proof of
Theorem \ref{kypbastille}.\\

The converse is as in the converse of Theorem \ref{kypbastille}, since formula \eqref{thilde1} still holds for any invertible Hermitian matrix $H$. In \eqref{411primo} the kernel
\[
\begin{split}
&-C\star (pI-A)^{-\star}H^{-1}(\overline{q}-A^*)^{-\star_r}\star_rC^*+\\
&\hspace{5mm}+\left(
\begin{pmatrix}
C\star (pI-A)^{-\star}H^{-1}&I\end{pmatrix}\begin{pmatrix}Q&S\\ S^*&R\end{pmatrix}
\begin{pmatrix}H^{-1}(\overline{q}I-A^*)^{-\star_r} \star_r C^*\\I\end{pmatrix}\right)\star (p+\overline{q})^{-\star}
\end{split}
\]
has now (possibly) a finite number of negative squares and the kernel
\[
\begin{pmatrix}
C\star (pI-A)^{-\star}H^{-1}&I\end{pmatrix}\begin{pmatrix}Q&S\\ S^*&R\end{pmatrix}
\begin{pmatrix}H^{-1}(\overline{q}I-A^*)^{-\star_r} \star_r C^*\\I\end{pmatrix}(p+\overline{q})^{-\star}
\]
is still positive negative, for real $x,y$ in both cases and then in the open right half-space by slice hyperholomorphic extension.
\end{proof}

We now consider the counterpart of Theorem \ref{gs1234} in the quaternionic setting for
generalized
Schur functions. Here we omit the proof since it makes use of the map $\chi$ and of the proof of Theorem \ref{tm61}. We also remark that the set of Hermitian matrices which satisfy
\eqref{opera4!4!6!7!8!} form a convex set.

\begin{theorem}
A $\mathbb H^{n\times n}$-valued rational function slice hyperholomorhic at $\infty$ and with minimal realization \eqref{realinfi} is contractive on the quaternionic unit sphere if and only if
there exists an Hermitian non-singular matrix $H\in\mathbb H^{N\times N}$ such that
\begin{equation}
\label{opera4!4!6!7!8!}
\begin{pmatrix}H&0\\0&I_{n}\end{pmatrix}-
\begin{pmatrix}A&B\\ C&D\end{pmatrix}^*
\begin{pmatrix}H&0\\0&I_{n}\end{pmatrix}
\begin{pmatrix}A&B\\ C&D\end{pmatrix}
\ge 0.
\end{equation}
\end{theorem}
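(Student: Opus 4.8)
The plan is to mirror exactly the two-step reduction that proved Theorem \ref{tm61} and its positive counterpart Theorem \ref{thmko}, transporting everything through the homomorphism $\chi$ to the complex setting, invoking the complex generalized Schur result of Theorem \ref{gs1234}, and then returning to $\mathbb H$ using the symmetry/self-adjointness argument built around the involution $a$ of Remark \ref{REMp}. First I would restrict $S$ to the real interval $(-1,1)$ and apply $\chi$, so that the statement that $S$ is contractive on the quaternionic unit sphere becomes a statement about the kernel
\[
K_{\chi(S)}(x,y)=\bigl(I-\chi(S)(x)\chi(S)(y)^*\bigr)\star(1-xy)^{-\star}
\]
having a finite number of negative squares on $(-1,1)\setminus P$, where $P$ collects the real points where $\det(xI-\chi(A))$ vanishes. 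By Theorem \ref{thm146} the complex realization $\chi(S)(z)=\chi(D)+\chi(C)(zI_{2N}-\chi(A))^{-1}\chi(B)$ is minimal.

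The next step is to pass from the interval to the whole open unit disk. Here I would invoke Theorem \ref{th1234}: since $\chi(S)$ is rational, hence meromorphic in $\mathbb D$, and $K_{\chi(S)}$ has a finite number of negative squares on $(-1,1)\setminus P$, that same number of negative squares is attained in $\mathbb D\setminus P_1$, with $P_1$ the zeros of $\det(zI-\chi(A))$ in $\mathbb D$. Thus $\chi(S)$ is a generalized Schur function of complex variable, and Theorem \ref{gs1234} supplies an invertible Hermitian matrix $H\in\mathbb C^{2N\times 2N}$ satisfying \eqref{opera4!4!6!7!8!21!} with $(A,B,C,D)$ replaced by $(\chi(A),\chi(B),\chi(C),\chi(D))$.

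Finally I would return to the quaternionic setting exactly as in Theorem \ref{kypbastille}. The point is that if $H$ solves the complex Stein-type inequality \eqref{opera4!4!6!7!8!21!} then so does $a(H)=E_N^{-1}\overline{H}E_N$: conjugating the inequality and conjugating the blocks of $\chi(A),\chi(B),\chi(C),\chi(D)$ by the matrices $E_N,E_n$ of Lemma \ref{oufouf}, and using that conjugation by $E$ preserves positivity (Remark \ref{REMp}), one checks that the conjugated matrices are again in the range of $\chi$ and the inequality is preserved. Hence $H_1=\tfrac{H+a(H)}{2}$ is still invertible Hermitian (invertibility being generic/checkable, or arranged by a small perturbation within the convex solution set), satisfies the symmetry \eqref{opera4!4!4!4!}, and therefore by Lemma \ref{oufouf} equals $\chi(X)$ for a Hermitian quaternionic matrix $X\in\mathbb H^{N\times N}$; setting $H=X$ yields \eqref{opera4!4!6!7!8!}. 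The converse direction proceeds by reading these arguments backward, exactly as the converse in Theorem \ref{thmko} (which in turn is the ball analogue of the computation in the proof of Theorem \ref{kypbastille}), producing a factorization that exhibits $K_S$ as having at most the prescribed number of negative squares on the real axis and then extending by slice hyperholomorphy.

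I expect the main obstacle to be the preservation of \emph{non-singularity} of $H_1=\tfrac{H+a(H)}{2}$: unlike the positive-definite case of Theorem \ref{kypbastille}, where $H$ is negative definite and averaging with $a(H)$ stays definite, here $H$ is only invertible Hermitian and the average of two invertible matrices need not be invertible. The fix is to exploit the convexity of the solution set together with the fact that $a$ preserves the signature, so that $H$ and $a(H)$ have the same inertia; one then argues that the symmetrization can be chosen invertible (e.g. by perturbing inside the open, convex, same-signature stratum of solutions), which is precisely the delicate point that distinguishes the generalized case from the positive-definite one.
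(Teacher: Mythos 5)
Your route is exactly the one the paper intends: the paper omits the proof of this theorem, saying only that it uses the map $\chi$ and the proof of Theorem \ref{tm61}, and your chain (restrict to the reals, apply $\chi$, extend with Theorem \ref{th1234}, invoke Theorem \ref{gs1234}, symmetrize with the involution $a$ and Lemma \ref{oufouf}, read the factorization backwards for the converse) is precisely that chain. The genuine problem is the step you yourself flag as delicate: the non-singularity of $H_1=\tfrac{H+a(H)}{2}$. Your proposed fix does not work. Equality of the inertias of $H$ and $a(H)$ does not protect the average: $\mathrm{diag}(1,-1)$ and $\mathrm{diag}(-1,1)$ have the same inertia and average to $0$; for the same reason the ``same-signature stratum'' of solutions is not convex. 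Worse, there may be no room to perturb at all, because the solution set of the non-strict inequality \eqref{opera4!4!6!7!8!21!} can be a single point: for $S(z)=1/z$ with minimal realization $A=0$, $B=C=1$, $D=0$, the inequality reads $\mathrm{diag}(H-1,1-H)\ge 0$ and forces $H=1$ exactly. So, as written, your argument produces a Hermitian $a$-fixed solution $H_1=\chi(X)$, but not a non-singular one, and non-singularity is what the theorem asserts.

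The correct repair is a lemma, not a perturbation: if $(\chi(C),\chi(A))$ is observable, then \emph{every} Hermitian solution of \eqref{opera4!4!6!7!8!21!} is invertible, so $H_1$, being a solution, is automatically non-singular. For the half-plane inequality \eqref{opera4!4!} this takes two lines: if $Hx=0$, then $x^*\bigl(H\chi(A)+\chi(A)^*H\bigr)x=2\,\mathrm{Re}\bigl((Hx)^*\chi(A)x\bigr)=0$, and positive semidefiniteness of the whole block matrix then forces $\bigl(H\chi(A)+\chi(A)^*H\bigr)x=0$ and $\bigl(\chi(C)+\chi(B)^*H\bigr)x=0$, i.e.\ $H\chi(A)x=0$ and $\chi(C)x=0$; hence $\ker H$ is $\chi(A)$-invariant and contained in $\ker\chi(C)$, so it is trivial by observability. (This lemma is also silently needed in the paper's own proof of Theorem \ref{tm61}, where the average must likewise be shown invertible.) For the Stein inequality of the present theorem, reduce to the half-plane case by a Cayley transform of the system matrix $V=\begin{pmatrix}\chi(A)&\chi(B)\\ \chi(C)&\chi(D)\end{pmatrix}$: replacing $V$ by $uV$ with $|u|=1$ changes neither the inequality nor observability, so one may assume $I-V$ invertible; then $W=(I+V)(I-V)^{-1}$ satisfies $PW+W^*P=2(I-V)^{-*}\bigl(P-V^*PV\bigr)(I-V)^{-1}\ge 0$ with $P=\mathrm{diag}(H,I_{2n})$, which is the half-plane inequality for the blocks of $W$. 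The argument above gives that $\ker H$ is $W_{11}$-invariant and inside $\ker W_{21}$; consequently $\ker H\oplus\{0\}$ is $W$-invariant, hence $V$-invariant (since $I+W=2(I-V)^{-1}$ is invertible), which puts $\ker H$ inside the unobservable subspace of $(\chi(C),\chi(A))$, i.e.\ $\ker H=\{0\}$. With this lemma in place your symmetrization closes the forward direction, and the rest of your outline, including the converse via the ball analogue of \eqref{thilde1} as in Theorems \ref{kypbastille} and \ref{thmko}, is in line with the paper.
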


\section*{Acknowledgments}
Daniel Alpay thanks the Foster G. and Mary McGaw Professorship in Mathematical Sciences, which supported this research. It is a pleasure to thank Prof. Rovnyak for comments
on Theorem \ref{thm123}.

\end{document}